\theoremstyle{plain}% default
\newtheorem{theorem}{Theorem}
\newtheorem{lemma}{Lemma}
\newtheorem{proposition}{Proposition}
\newtheorem{corollary}{Corollary}
\theoremstyle{definition}
\newtheorem{remark}{Remark}
\newcommand{\AAP}{\emph{Adv. Appl. Prob.}}
\newcommand{\BAMS}{\emph{Bull. Amer. Math. Soc.}}
\newcommand{\AP}{\emph{Ann. Prob.}}
\newcommand{\R}{\mathbb{R}}
\DeclareMathOperator{\card}{card}
\DeclareMathOperator{\spa}{span}
\DeclareMathOperator{\Var}{Var}
\begin{document}
\author{Julia H\"orrmann\footnotemark[1]\,, Daniel Hug\footnotemark[2]}
\title{On the volume of the zero cell of a class of \\ isotropic Poisson hyperplane tessellations}
\date{}
\renewcommand{\thefootnote}{\fnsymbol{footnote}}
\footnotetext[1]{Karlsruhe Institute of Technology (KIT), Department of Mathematics, D-76128 Karlsruhe, Germany,\\
julia.hoerrmann@kit.edu}
\footnotetext[2]{Karlsruhe Institute of Technology (KIT), Department of Mathematics, D-76128 Karlsruhe, Germany, daniel.hug@kit.edu}

\maketitle 
\begin{abstract}
\noindent
% text of abstract goes here!
We study a parametric class of isotropic but not necessarily stationary Poisson hyperplane tessellations in $n$-dimensional 
Euclidean space. Our focus is on the volume of the zero cell, i.e.\ the cell containing the origin. As a main result, we obtain an explicit formula for the variance of the volume of the zero cell in arbitrary dimensions. From this formula we deduce the asymptotic behaviour of the volume of the zero cell as the dimension 
goes to infinity.

\noindent\textit{Key words:}
Poisson hyperplane tessellation; Poisson-Voronoi tessellation; Zero cell; Typical cell; Variance; High dimensions

\noindent\textit{2010 Mathematics Subject Classification:} Primary: 60D05; Secondary: 52A22 % insert the primary Maths Subject

\end{abstract}

%%%%%%%%%%%%%%%%%%%%%%%%%%%%%%%%%%%%%%%%%%%%%%%%%%%%%%%%%%%%%%%%%%%%%%%%%%%%%%%%%%%%%%%%%%%%%%%%%%%%%%%%%%%%%%%%
%%%%%%%%%%%%%%%%%%%%%%%%%%%%%%%%%%%%%%%%%%%%%%%%%%%%%%%%%%%%%%%%%%%%%%%%%%%%%%%%%%%%%%%%%%%%%%%%%%%%%%%%%%%%%%%%
\section{Introduction}
The majority of contributions to random tessellations is devoted to investigations 
in low and fixed dimensions. In particular, there exist only a few results on random tessellations in high dimensions, that is, with focus on asymptotic aspects as the dimension goes to infinity. 
Recently,  the typical cell of a stationary Poisson-Voronoi tessellation in high dimensions has been 
studied in \cite{AS}, \cite{Muche1} and \cite{Yao1}.
Alishahi and Sharifitabar \cite{AS} investigate the asymptotic behaviour of the volume and the shape of the  typical cell of a stationary Poisson-Voronoi tessellation as  the dimension $n$ of the space goes to infinity. In particular, 
they showed that the variance of the volume of the typical cell converges to zero exponentially fast as $n\to\infty$ whereas it is well  known that the expected volume is independent of the dimension. 
In the course of their investigation,  they made use of an explicit formula for the variance of the volume of the typical cell in arbitrary dimensions. 
The asymptotic behaviour of the volume of the typical cell  was studied earlier in the more general context of the nearest neighbour analysis by Newman et al. in \cite{Newman1983} and \cite{Newman1985}. In \cite[Theorem 10]{Newman1983}, they showed that, if the intensity of the underlying Poisson process is $\gamma$, then the $k$th moment of the volume of the typical cell converges to $\gamma^{-k}$ as the dimension goes to infinity and therefore, in particular, the volume of the typical cell converges in distribution to $\gamma^{-1}$.  
\medskip

In this work, we consider a parametric class of Poisson hyperplane tessellations 
and focus on the volume of the cell containing the origin (the zero cell). It is then natural to explore whether an asymptotic behaviour similar to that of the typical cell 
of a stationary Poisson-Voronoi tessellation is exhibited by the zero cell in the present more general class of random tessellations. 
An interesting family of not necessarily stationary or isotropic Poisson hyperplane tessellations is introduced in connection with the investigation of Kendall's conjecture in \cite{Hug3}. In the isotropic case, these Poisson hyperplane tessellations are completely determined by two parameters, the intensity $\gamma \in (0,\infty)$ and the distance exponent $r \in (0,\infty)$. 
For a special choice of the distance exponent and the intensity, the zero cell is equal in distribution to the typical cell of a stationary Poisson-Voronoi tessellation (cf.\ \cite{Hug3}, 
\cite[Sections 5.2 and 5.3.2]{Calka2010}). 
Therefore this family of isotropic Poisson hyperplane tessellations provides a general framework for investigating tessellations in high dimensions. 
Trying to extend the approach of Alishahi and Sharifitabar \cite{AS} to the wider context of the zero cell of this family of Poisson hyperplane tessellations, we came across the need for a formula for the variance of the volume of the zero cell. 
Finding a manageable expression turned out to be a rather complex issue. In fact, the formulas presented here mark the starting point of a more detailed study of the asymptotic behaviour of characteristics of the zero cell as  the dimension goes to infinity. Results concerning lower dimensional sections of the zero cell 
and other shape characteristics as well as a connection to the hyperplane conjecture will be considered separately.

\medskip

In the following, we give a more detailed overview of our results. A precise description of 
the particular (parametric) model of a Poisson hyperplane tessellation used here is given in Section 2. 
For this model we then derive, in Section 3,  an explicit expression for the expectation and bounds for the
moments of the volume of the zero cell in Proposition \ref{3.3}. 
An explicit expression for the second moment and the variance of the volume
of the zero cell is provided in Theorem \ref{2}. These results follow from a 
sequence of lemmas which make use of integral geometric transformations and  
 the symmetries of the geometric situation. 
In Theorem \ref{3.10} we deduce bounds for the variance of the volume of
the zero cell which involve auxiliary quantities $D(n,r)$ and $E(n,r)$. These
quantities have to be evaluated and bounded from above and below for a given distance
exponent $r$ and an intensity $\gamma$. 
In Corollary \ref{5.1} we consider the zero cell of a Poisson 
hyperplane tessellation with constant distance exponent $r$. The 
choice $r=1$ corresponds to a stationary Poisson hyperplane tessellation. For constant  
intensity we then prove that all moments as well as the variance of the volume
of the zero cell go to infinity as the dimension $n$ goes to
infinity.

In order to fix the expected volume of the zero cell, independent of the dimension, 
 the intensity of the underlying Poisson hyperplane process can 
 be chosen appropriately as a function of the  dimension $n$. However, 
 it follows from our bounds that as long as the distance exponent $r$ is fixed, 
 the variance of the volume of the zero cell still goes to infinity as  $n$ goes to infinity. The investigation in Section 4 thus suggests that in order 
to ensure that the variance converges to zero, the distance exponent $r$ 
has to be adjusted to the dimension $n$. 
In Corollary \ref{5.6} we summarize the case where the distance exponent $r$ is 
 proportional to the dimension $n$, i.e. $r=an$ with a fixed factor $a\in(0,\infty)$.
For constant intensity we show that the expectation and the moments of
the volume of the zero cell now all converge to zero as the dimension $n$ goes to
infinity.

In Theorem \ref{5.10} we finally consider the situation where 
 the distance exponent $r$ is proportional to the
dimension, i.e.\ $r=an$ with some fixed $a>0$, and the intensity 
$\widehat{\gamma}(a,n)$ is chosen as a function of the
dimension $n$ and the  factor $a$ in such a way that the
expected volume of the zero cell is equal to a positive constant. 
In this case we prove that the variance of the volume of the zero cell
converges to zero at an exponential speed of optimal order as $n\to\infty$. 
In particular, the volume of the zero cell converges in
distribution. 
In the special case $r=n$ (i.e.\ $a=1$), we fully recover results for the typical 
cell of a Poisson-Voronoi tessellation  obtained in \cite{AS}. 
The present more general approach applies to a larger class of tessellations and 
admits various other extensions and variations that will be discussed in detail in subsequent work.  
%%%%%%%%%%%%%%%%%%%%%%%%%%%%%%%%%%%%%%%%%%%%%%%%%%%%%%%%%%%%%%%%%%%%%%%%%%%%%%%%%%%%%%%%%%%%%%%%%%%%%%%%%%%%%%%%
%%%%%%%%%%%%%%%%%%%%%%%%%%%%%%%%%%%%%%%%%%%%%%%%%%%%%%%%%%%%%%%%%%%%%%%%%%%%%%%%%%%%%%%%%%%%%%%%%%%%%%%%%%%%%%%%
\section{Preliminaries}

In the following, we mainly use the notation and terminology of the monograph \cite{Sto}. 
We work in an $n$-dimensional real Euclidean vector space $\mathbb{R}^n$, $n\ge 2$, with scalar product
$\langle \cdot,\cdot \rangle$ and norm $\|\cdot\|$.
The unit ball $\{x \in \mathbb{R}^n: \|x\| \leq 1\}$ centred at the origin $o$ is denoted by $B^n$, its boundary is the unit sphere $S^{n-1}$.
For $k \in\{0,\ldots, n\}$, the Grassmannian of $k$-dimensional linear subspaces of $\mathbb{R}^n$ is denoted by $G(n,k)$, and the affine Grassmannian of $k$-dimensional affine subspaces ($k$-flats) by $A(n,k)$; both are equipped with their standard topologies.
For $u \in S^{n-1}$ and $t \in [0,\infty)$, we write
\[
H(u,t) := \{x \in \mathbb{R}^n : \langle x,u\rangle = t\}, \quad
H^{-}(u,t) := \{x \in \mathbb{R}^n : \langle x,u\rangle \leq t\}.
\]
Lebesgue measure on $\mathbb{R}^n$ is denoted by $\lambda$.
For $E\in G(n,k)$, Lebesgue measure on $E$ is denoted by $\lambda_E$. Besides we define
$S^{k-1}_E := E\cap S^{n-1}$ and $H_E(u,t):=E\cap H(u,t) $,  for $u \in S^{k-1}_E$ and 
$k\in\{1,\ldots,n\}$. 
The $s$-dimensional Hausdorff measure is denoted by
$\mathcal{H}^s$, where $s\ge 0$. For $s=n$ we sometimes refer to it as $n$-dimensional volume $V_n$.
A frequently occurring constant is the volume of the unit ball,
\[
\kappa_k := \lambda_k(B^k) = \frac{\pi^{\frac{k}{2}}}{\Gamma(\frac{k}{2}+1)},
\]
for $k\in\mathbb{N}_0$. 
The surface area of the unit sphere $S^{k-1}$, for $k\in\mathbb{N}$, is given by
\[
\omega_k := \mathcal{H}^{k-1}(S^{k-1})= k\kappa_k = \frac{2\pi^{\frac{k}{2}}}{\Gamma(\frac{k}{2})}.
\]
We repeatedly use that
\begin{equation}\label{eqGamma}
\int\limits_0^{\frac{\pi}{2}}(\sin\varphi)^\alpha (\cos\varphi)^\beta d\varphi = \frac{1}{2}\frac{\Gamma(\frac{\alpha+1}{2})\Gamma(\frac{\beta+1}{2})}{\Gamma(\frac{\alpha+\beta+2}{2})},
\end{equation}
for $\alpha,\beta>-1$; see \cite[(5.6)]{Artin1} or \cite[(12.42)]{WW}. In the following, we often use the connection 
between the Beta and the Gamma function (see \cite[(12.41)]{WW}) and Stirling's formula for the Gamma function 
(see \cite[(12.33)]{WW} or \cite[p.~24]{Artin1}). 
For $m\in \mathbb{N}$ and  $x_1,\ldots,x_m \in \R^n$, we denote by $[x_1,\dots,x_m]$ the convex hull and by $\spa\{x_1,\ldots,x_m\}$ the linear hull of $x_1,\ldots,x_m$.

The family of nonempty, compact, convex subsets of $\mathbb{R}^n$ is denoted by $\mathcal{K}^n$.
For a topological space $(T,\mathcal{T})$, a measure is always defined on the $\sigma$-algebra $\mathcal{B}({T})$ of Borel sets of ${T}$, i.e.\ the smallest $\sigma$-algebra containing the open sets $\mathcal{T}$.
We write $\mu^k := \mu \otimes \cdots \otimes \mu$, with $k$ factors $\mu$,  
for the $k$-fold product of a measure $\mu$.
By $SO_n$ we denote the group of proper (i.e.~rotation preserving) rotations on $\mathbb{R}^n$, and $\nu_n$  
is the unique Haar probability measure on $SO_n$.

The following setting has previously been considered in a more general, not necessarily isotropic framework,  
in the context of 
Kendall's problem \cite{Hug3} (see also \cite{HilhorstCalka08}, \cite{Calka2010}). Let 
$
(\Omega,\mathcal{A},\mathbb{P})$ denote the underlying probability space.  Further, 
let $X$ be a Poisson hyperplane process in $\mathbb{R}^n$, i.e.\ a Poisson point process in the space $A(n,n-1)$.
Subsequently, we identify a simple counting measure with its support, so that for a Borel set $A \subset A(n,n-1)$ both notations $X(A)$ and $\card(X \cap A)$ denote the number of elements of $X$ in A. We assume that the intensity measure
$\Theta(\cdot) = \mathbb{E}X(\cdot)$ of $X$ is of the form
\begin{equation}\label{intmeas}
\Theta (\cdot) = \frac{2\gamma}{n\kappa_n} \int\limits_{S^{n-1}}\int\limits_0^\infty 
\mathbf{1}\{H(u,t) \in \cdot \} t^{r-1}\,dt\,\mathcal{H}^{n-1}(du)
\end{equation}
with $\gamma >0$ and $r \in (0,\infty)$. We refer to $\gamma$ as the {\itshape intensity} and to $r$ as the 
{\itshape distance exponent} of the hyperplane process $X$.
%As $\Theta$ has no atoms and $X$ is a Poisson process the distribution of $X$ is uniquely determined by $\Theta$.
Clearly, $\Theta$ is rotation invariant for all $r>0$. Furthermore, $\Theta$ is translation invariant only for $r=1$. 
Observe  that a calculation similar to the one required for \eqref{eq_c_nr} implies that $\Theta(\{H\in A(n,n-1): H\cap [-e,e]\neq \emptyset \}) = \Theta(\{H\in A(n,n-1): H\cap [0,2e]\})$ for $e\in S^{n-1}$ if and only if $r=1$. (It is well known that the intensity measure is translation invariant for $r=1$; cf.~ \cite[(4.33)]{Sto} and the reflection symmetry of $\mathcal{H}^{n-1}$ on the unit sphere.) 
 Therefore, since $X$ is a Poisson process, $X$ is always isotropic but stationary only for $r=1$.

The random polytope
\[
Z_0 := \bigcap\limits_{H \in X} H^{-},
\]
is the {\itshape zero cell} of the hyperplane process $X$, where $H^-$ 
denotes the (almost surely uniquely  determined) closed half-space bounded by $H$ which contains the origin. 
Clearly, $Z_0$ depends on $\gamma$ and $r$; although this dependence is not made explicit by our notation. 

For the distance exponent $r=n$ the zero cell $Z_0$ is equal in distribution to the typical cell of a stationary Poisson-Voronoi tessellation (see \cite{Hug3}). 
More detailed information on the topic of random tessellations is provided, e.g., in  \cite{Okabe1}, \cite{Sto}, 
\cite[Chapter 6]{Spodarev} and \cite{Stoyan1}.
Poisson-Voronoi tessellations have been studied extensively in the literature; see, for instance,  \cite{Hug2}, \cite{Moller1} and \cite{Muche2}.
% 
%For results in arbitrary dimension, especially \cite{Miles2} and \cite{Moller2} are interesting.
%
Stationary Poisson hyperplane tessellations have been considered, e.g., in \cite{Hug4}, \cite{Hug7, Mecke5, Mecke6}.
Recently, also non-stationary Poisson hyperplane tessellations have attracted some attention (cf.\   \cite{Hug3}, 
 \cite{Schneider1}). 
A review of recent results on random polytopes is given in \cite{hug:Barany2008}, \cite{hug:Reitzner2010},  \cite{RecentResults},  \cite[Chapter 8]{Sto}, \cite[Chapter 7]{Spodarev}.

%%%%%%%%%%%%%%%%%%%%%%%%%%%%%%%%%%%%%%%%%%%%%%%%%%%%%%%%%%%%%%%%%%%%%%%%%%%%%%%%%%%%%%%%%%%%%%%%%%%%%%%%%%%%%%%
%%%%%%%%%%%%%%%%%%%%%%%%%%%%%%%%%%%%%%%%%%%%%%%%%%%%%%%%%%%%%%%%%%%%%%%%%%%%%%%%%%%%%%%%%%%%%%%%%%%%%%%%%%%%%%%
%% ==============
\section{A general formula for the variance}
%% =============

Let $X$ be a Poisson hyperplane process in $\mathbb{R}^{n}$ with an intensity measure of the form
\eqref{intmeas}. We assume that $\gamma > 0$ and $r \in (0,\infty)$.
By Fubini's theorem and basic properties of a Poisson process, we have
\begin{align}
\mathbb{E}[V_n(Z_0)^k]  =& \mathbb{E} \Big[\int\limits_{\mathbb{R}^n}\mathbf{1}_{Z_0}(x_1)\,dx_1\cdots
\int\limits_{\mathbb{R}^n}\mathbf{1}_{Z_0}(x_k)\,dx_k \Big]\nonumber\\
 =& \int\limits_{(\mathbb{R}^n)^k}\mathbb{P}(x_1,\ldots,x_k\in Z_0)\,dx_1 \ldots dx_k \nonumber\\
 =& \int\limits_{(\mathbb{R}^n)^k}\exp \Big[-\frac{2\gamma}{n\kappa_n} \int\limits_{S^{n-1}}\int\limits_0^\infty\mathbf{1}\{H(u,t)\cap [o,x_1,\ldots,x_k] \neq \emptyset\}\nonumber\\
& \qquad \qquad\times t^{r-1}dt\,\mathcal{H}^{n-1}(du)\Big]\, dx_1\ldots dx_k.\label{kthmoment}
\end{align}
From \eqref{kthmoment} we now deduce lower and upper bounds for the moments of $V_n(Z_0)$.
%<<<<<<<<<<<<<<<<<<<<<<<<<<<<<<<<<<<<<<<<<<<<<<<<<<<<<<<<<<<<<<<<<<<<<<<<<<<<<<<<<<<<
For $e\in S^{n-1}$, we define 
\begin{equation}\label{eq_c_nr}
c(n,r) := \int\limits_{S^{n-1}}\langle e,u\rangle_+^r\,\mathcal{H}^{n-1}(du)
= \pi^{\frac{n-1}{2}}\frac{\Gamma(\frac{r+1}{2})}{\Gamma(\frac{r+n}{2})},
\end{equation}
which is indeed independent of the choice of the unit vector $e$. The explicit value 
is determined by a suitable decomposition of spherical Lebesgue measure (see \cite[(1.41)]{mueller}) 
and by an application of \eqref{eqGamma}.  
%<<<<<<<<<<<<<<<<<<<<<<<<<<<<<<<<<<<<<<<<<<<<<<<<<<<<<<<<<<<<<<<<<<<<<<<<<<<<<<<<<<<<
%

%<<<<<<<<<<<<<<<<<<<<<<<<<<<<<<<<<<<<<<<<<<<<<<<<<<<<<<<<<<<<<<<<<<<<<<<<<<<<<<<<<<<<

The following result provides bounds from above and below for the moments 
of the volume of the zero cell. Note that the ratio of 
the upper and the lower bound is given by the ratio of the corresponding values of the Gamma functions 
in these bounds. 

\begin{proposition}\label{3.3}
For $k\in\mathbb{N}$, we have
\[
\Gamma\left(\frac{n}{r}+1\right)^k \kappa_n^k\left(\frac{n\kappa_n r}{2\gamma c(n,r)}\right)^{\frac{kn}{r}}
\leq
\mathbb{E}[V_n(Z_0)^k]
\leq
\Gamma\left(\frac{kn}{r}+1\right)\kappa_n^k\left(\frac{n\kappa_n r}{2\gamma c(n,r)}\right)^{\frac{kn}{r}}.
\]
In particular, for $k=1$ we get 
\[
\mathbb{E}[V_n(Z_0)]=\Gamma\left(\frac{n}{r}+1\right)\kappa_n\left(\frac{n\kappa_n r}{2\gamma c(n,r)}\right)^{\frac{n}{r}}.
\]
\end{proposition}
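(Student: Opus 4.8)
The plan is to start from the integral representation \eqref{kthmoment} and first carry out the inner integration over $t$. Fix $u\in S^{n-1}$ and set $K:=[o,x_1,\ldots,x_k]$. Since $K$ is convex and contains the origin, the hyperplane $H(u,t)$ meets $K$ exactly when $t$ lies between $\min_{y\in K}\langle y,u\rangle\le 0$ and $\max_{y\in K}\langle y,u\rangle=\max_i\langle x_i,u\rangle_+$; as we only integrate over $t\ge 0$, the condition collapses to $0\le t\le \max_i\langle x_i,u\rangle_+$. Hence $\int_0^\infty \mathbf{1}\{H(u,t)\cap K\neq\emptyset\}\,t^{r-1}\,dt=\tfrac1r\big(\max_i\langle x_i,u\rangle_+\big)^r$, where $\max_i\langle x_i,u\rangle_+$ is just the support function of $K$, and \eqref{kthmoment} becomes
\[
\mathbb{E}[V_n(Z_0)^k]=\int\limits_{(\mathbb{R}^n)^k}\exp\left(-\frac{2\gamma}{n\kappa_n r}\int\limits_{S^{n-1}}\Big(\max_i\langle x_i,u\rangle_+\Big)^r\,\mathcal{H}^{n-1}(du)\right)dx_1\cdots dx_k.
\]
Writing $\beta:=\frac{2\gamma c(n,r)}{n\kappa_n r}$, the two target bounds are equivalent to bracketing this integral between $\Gamma(\tfrac nr+1)^k\kappa_n^k\beta^{-kn/r}$ and $\Gamma(\tfrac{kn}{r}+1)\kappa_n^k\beta^{-kn/r}$, since $\beta^{-kn/r}=\big(\tfrac{n\kappa_n r}{2\gamma c(n,r)}\big)^{kn/r}$.

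For the lower bound on the moment I would bound the exponent from above. For nonnegative $a_i:=\langle x_i,u\rangle_+$ one has $(\max_i a_i)^r=\max_i a_i^r\le\sum_i a_i^r$, so integrating over $u$ and using the homogeneity together with \eqref{eq_c_nr}, i.e.\ $\int_{S^{n-1}}\langle x,u\rangle_+^r\,\mathcal{H}^{n-1}(du)=c(n,r)\|x\|^r$, bounds the exponent by $\beta\sum_i\|x_i\|^r$. The exponential then factorises over the $x_i$, so the integral is at least $\big(\int_{\mathbb{R}^n}e^{-\beta\|x\|^r}\,dx\big)^k$. A polar-coordinate computation with the substitution $w=\beta\|x\|^r$ gives $\int_{\mathbb{R}^n}e^{-\beta\|x\|^r}\,dx=\omega_n\cdot\tfrac1r\Gamma(\tfrac nr)\beta^{-n/r}=\kappa_n\Gamma(\tfrac nr+1)\beta^{-n/r}$, using $\omega_n=n\kappa_n$ and $\tfrac nr\Gamma(\tfrac nr)=\Gamma(\tfrac nr+1)$; raising to the $k$th power yields the claimed lower bound.

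For the upper bound I would bound the exponent from below. By symmetry choose an index $i^\ast$ with $\|x_{i^\ast}\|=\max_i\|x_i\|$; then $\max_i\langle x_i,u\rangle_+\ge\langle x_{i^\ast},u\rangle_+$, so integrating over $u$ and using \eqref{eq_c_nr} shows the exponent is at least $\beta(\max_i\|x_i\|)^r$. It then remains to evaluate $\int_{(\mathbb{R}^n)^k}\exp(-\beta(\max_i\|x_i\|)^r)\,dx$, which in fact equals the upper bound. Passing to polar coordinates in each factor contributes $(n\kappa_n)^k$ from the angular parts and reduces matters to $\int_{[0,\infty)^k}e^{-\beta(\max_i s_i)^r}\prod_i s_i^{n-1}\,ds$. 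Exploiting permutation symmetry and restricting to $s_1\le\cdots\le s_k$ (a factor $k!$), the nested integrals over $s_1,\ldots,s_{k-1}$ telescope to $s_k^{(k-1)n}/((k-1)!\,n^{k-1})$, leaving the single integral $\int_0^\infty e^{-\beta s^r}s^{kn-1}\,ds=\tfrac1r\Gamma(\tfrac{kn}{r})\beta^{-kn/r}$; collecting the constants and using $\tfrac{kn}{r}\Gamma(\tfrac{kn}{r})=\Gamma(\tfrac{kn}{r}+1)$ produces exactly $\Gamma(\tfrac{kn}{r}+1)\kappa_n^k\beta^{-kn/r}$.

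The case $k=1$ is precisely the borderline where both chains of inequalities are equalities, since $\max_i\langle x_i,u\rangle_+=\langle x_1,u\rangle_+$ and $\max_i\|x_i\|=\|x_1\|$; the two bounds coincide and give the stated closed form for $\mathbb{E}[V_n(Z_0)]$. The main obstacle is the upper bound: while the two elementary inequalities $(\max_i a_i)^r\le\sum_i a_i^r$ and $\max_i\langle x_i,u\rangle_+\ge\langle x_{i^\ast},u\rangle_+$ are easy to locate, one must pick the right one in each regime (summing the powers for the lower moment bound, dominating by the point of largest norm for the upper moment bound), and the appearance of $\Gamma(\tfrac{kn}{r}+1)$ rather than $\Gamma(\tfrac nr+1)^k$ hinges on integrating all $k$ radial variables jointly against the single constraint $\max_i s_i$ via the symmetric nested integration above.
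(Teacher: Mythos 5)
Your proof is correct and follows essentially the same route as the paper: your bound $\left(\max_i\langle x_i,u\rangle_+\right)^r\le\sum_i\langle x_i,u\rangle_+^r$ is the paper's union-of-segments indicator bound after the $t$-integration, and your domination by the point of largest norm is exactly the paper's reduction to $[o,s_1v_1]$ with $s_1=\max_i s_i$ in polar coordinates. The only differences are cosmetic (carrying out the $t$-integral first and ordering the radii with a factor $k!$ rather than singling out the largest with a factor $k$).
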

%-------------------------------------------------------------------------------------------
\begin{proof}
Starting with \eqref{kthmoment}, introducing polar coordinates and by symmetry, we get
\begin{align*}
&\mathbb{E}[V_n(Z_0)^k]\\
& = k\int\limits_0^\infty\int\limits_0^{s_1}\ldots \int\limits_0^{s_1}\int\limits_{(S^{n-1})^k}
\exp\Big[-\frac{2\gamma}{n\kappa_n}\int\limits_{S^{n-1}}\int\limits_0^\infty\mathbf{1} \{H(u,t)\cap [o,s_1 v_1,\ldots,s_k v_k]\neq \emptyset\}\\
& \qquad\qquad \times t^{r-1}dt \,\mathcal{H}^{n-1}(du)\Big] s_1^{n-1}\ldots s_k^{n-1}\, \mathcal{H}^{n-1}(dv_k)\ldots \mathcal{H}^{n-1}(dv_1)\,ds_k\ldots ds_1\\
&\leq  k\int\limits_0^\infty\int\limits_0^{s_1}\ldots\int\limits_0^{s_1}\int\limits_{(S^{n-1})^k}
\exp\Big[-\frac{2\gamma}{n\kappa_n}\int\limits_{S^{n-1}}\int\limits_0^\infty \mathbf{1} \{H(u,t)\cap [o,s_1 v_1]\neq \emptyset\}\\
& \qquad\qquad \times t^{r-1}\,dt\,\mathcal{H}^{n-1}(du)\Big] s_1^{n-1}\ldots s_k^{n-1}\mathcal{H}^{n-1}(dv_k)\ldots \mathcal{H}^{n-1}(dv_1)\,ds_k\ldots ds_1.
\end{align*}
Since
\begin{align*}
&\int\limits_{S^{n-1}}\int\limits_0^\infty \mathbf{1} \{H(u,t)\cap [o,s_1 v_1]\neq \emptyset\}t^{r-1}\,dt\,\mathcal{H}^{n-1}(du)\\
&=\int\limits_{S^{n-1}}\int\limits_0^\infty \mathbf{1} \{0 \leq t \leq  \langle s_1 v_1, u\rangle_+\}t^{r-1}\,dt\,\mathcal{H}^{n-1}(du) \\
&=\frac{1}{r} s_1^r c(n,r),
\end{align*}
the upper bound follows easily. 

For the lower bound, we observe that
$H(u,t)\cap [o,x_1,\cdots,x_k] \neq \emptyset$ if and only if there is some  $ i \in \{1,\cdots,k\}$ 
such that $ H(u,t) \cap [o,x_i] \neq \emptyset$. 
Hence, $\mathbb{E}[V_n(Z_0)^k]$ can be bounded from below by
\begin{align*}
&\int\limits_{(\mathbb{R}^n)^k}
\exp\Big[-\frac{2\gamma}{n\kappa_n}\int\limits_{S^{n-1}}\int\limits_0^\infty \sum\limits_{i=1}^k\mathbf{1} \{H(u,t)\cap [o,x_i]\neq \emptyset\}t^{r-1}\,d t\,\mathcal{H}^{n-1}(d u)\Big] \, d x_1\ldots d x_k\\
&=\left(\int_{\R^n}\exp\Big[-\frac{2\gamma\|x_1\|^r}{n\kappa_n r}c(n,r)\Big]\, dx_1\right)^k
.
\end{align*}
Now the assertion can be shown by a straightforward calculation.
\end{proof}

\bigskip

%<<<<<<<<<<<<<<<<<<<<<<<<<<<<<<<<<<<<<<<<<<<<<<<<<<<<<<<<<<<<<<<<<<<<<<<<<<<<<<<<<<<<<<<<<<<
\begin{remark}\label{3.4}
 The lower bound in Proposition \ref{3.3} can also be obtained by an application of H\"older's inequality. 
 Moreover, for fixed $r$ and $k$, the ratio of the upper and the lower bound is of the order 
$n^{\frac{1-k}{2}}k^{\frac{kn}{r}}.$
 
%by interpreting
%$\mathbb{E}[V_n(Z_0)]^k$ which is just the lower bound as $\|V_n(Z_0)\|_1^k$ in $L^1(\mathbb{P})$ and
%$\mathbb{E}[V_n(Z_0)^k]$ as $\|V_n(Z_0)\|_k^k$ in $L^k(\mathbb{P})$.
\end{remark}
%
%<<<<<<<<<<<<<<<<<<<<<<<<<<<<<<<<<<<<<<<<<<<<<<<<<<<<<<<<<<<<<<<<<<<<<<<<<<<<<<<<

\medskip

For the statement of Theorem \ref{2}, which provides formulae for  the second moment and the variance of $V_n(Z_0)$, we need the constant
\[b_{n,2}:= \frac{\omega_{n-1}\omega_{n}}{4 \pi}\]
and the auxiliary functions
\[\alpha(t,\varphi) := \arctan\left(\frac{t-\cos\varphi}{\sin\varphi}\right)
\in\left(-\frac{\pi}{2},\frac{\pi}{2}\right)\]
and
\[
F_r(t,\varphi):= \frac{1}{c(2,r)} \left( t^r\,\int\limits_{-\frac{\pi}{2}}^{\alpha(t,\varphi)}(\cos\theta)^r\, d\theta+\int\limits_{\alpha(t,\varphi)-\varphi}^{\frac{\pi}{2}}
(\cos\theta)^r\, d\theta \right) 
\]
for $(t,\varphi)\in [0,1]\times(0,\pi)$. 
In the following, we will use that
\begin{equation}\label{Fbound}
\frac{1}{2}\le F_r(t,\varphi)\leq t^r+1\le  2 
\end{equation}
for all $(t,\varphi)\in [0,1]\times(0,\pi)$, 
which follows from the subsequent Remark \ref{3.9}, (1) and (2).

%%%%%%%%%%%%%%%%%%%%%%%%%%%%%%%%%%%%%%%%%%%%%%%%%%%%%%%%%%%%%%%%%%%%%%%%%%%%
\begin{theorem}\label{2}
Let $X$ be a Poisson hyperplane process in $\mathbb{R}^{n}$ with an intensity measure of the form \eqref{intmeas} with  intensity $\gamma > 0$ and distance exponent $r \in (0,\infty)$.
Then
\begin{align*}
\mathbb{E}[V_n(Z_0)^2]
=& \,\frac{8\pi b_{n,2}}{r} \,\Gamma\left(\frac{2n}{r}\right) \left(\frac{n\kappa_n r}{2\gamma c(n,r)}\right)^{\frac{2n}{r}}\int\limits_0^\pi \int\limits_0^1 \frac{t^{n-1}}{F_r(t,\varphi)^{\frac{2n}{r}}} (\sin \varphi)^{n-2}\,dt\,d\varphi
\end{align*}
and
\begin{align*}
 \Var[V_n(Z_0)]
 = &\, \frac{8\pi b_{n,2}}{r}\, \Gamma\left(\frac{2n}{r}\right) \left(\frac{n\kappa_n r}{2\gamma c(n,r)}\right)^{\frac{2n}{r}}\\
&\, \times \int\limits_0^\pi \int\limits_0^1 \left(\frac{1}{F_r(t,\varphi)^{\frac{2n}{r}}} - \frac{1}{(t^r+1)^{\frac{2n}{r}}}\right) t^{n-1}(\sin \varphi)^{n-2}\,dt\,d\varphi.
\end{align*}
\end{theorem}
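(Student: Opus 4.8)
The plan is to evaluate \eqref{kthmoment} for $k=2$, reduce the resulting integral over $(\R^n)^2$ to the claimed two-dimensional integral, and then obtain the variance by subtracting the square of the expectation from Proposition \ref{3.3}. First I would simplify the exponent: a hyperplane $H(u,t)$ with $t\ge 0$ meets $[o,x_1,x_2]$ if and only if $t\le\max\{\langle x_1,u\rangle_+,\langle x_2,u\rangle_+\}$, since $o\in[o,x_1,x_2]$ forces the support function in direction $u$ to equal this maximum. Hence the inner $t$-integral in \eqref{kthmoment} equals $\frac1r\max\{\langle x_1,u\rangle_+,\langle x_2,u\rangle_+\}^r$, so that
\[
\mathbb E[V_n(Z_0)^2]=\int_{(\R^n)^2}\exp\!\Big[-\frac{2\gamma}{n\kappa_n r}\int_{S^{n-1}}\max\{\langle x_1,u\rangle_+,\langle x_2,u\rangle_+\}^r\,\mathcal H^{n-1}(du)\Big]\,dx_1\,dx_2.
\]
Passing to polar coordinates $x_i=s_iv_i$ and noting that the exponent depends on $(v_1,v_2)$ only through the angle $\varphi:=\angle(v_1,v_2)$, rotation invariance reduces the $(v_1,v_2)$-integration to $\omega_n\omega_{n-1}\int_0^\pi(\cdots)(\sin\varphi)^{n-2}\,d\varphi$.

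Next I would carry out the spherical integral $I(s_1,s_2,\varphi):=\int_{S^{n-1}}\max\{s_1\langle v_1,u\rangle_+,s_2\langle v_2,u\rangle_+\}^r\,\mathcal H^{n-1}(du)$ in two stages. Since $v_1,v_2\in L:=\spa\{v_1,v_2\}$, the integrand depends on $u$ only through its orthogonal projection onto $L$. Decomposing $\mathcal H^{n-1}$ relative to $\R^n=L\oplus L^\perp$, i.e.\ writing $u=\cos\psi\,\xi+\sin\psi\,\zeta$ with $\xi\in S^1_L$, $\zeta\in S^{n-3}\subset L^\perp$, $\psi\in[0,\frac\pi2]$ and $\mathcal H^{n-1}(du)=(\cos\psi)(\sin\psi)^{n-3}\,d\psi\,\mathcal H^1(d\xi)\,\mathcal H^{n-3}(d\zeta)$, the integrand factors as $(\cos\psi)^r$ times a function of $\xi$ alone (for $n=2$ one has $u=\xi$ and this step is vacuous). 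The $\zeta$- and $\psi$-integrations are then elementary, the latter via \eqref{eqGamma}, and leave the planar integral $J(s_1,s_2,\varphi):=\int_0^{2\pi}\max\{s_1(\cos\theta)_+,s_2(\cos(\theta-\varphi))_+\}^r\,d\theta$ up to an explicit constant, where I would keep $\frac12\omega_{n-2}\Gamma(\tfrac{n-2}2)=\pi^{(n-2)/2}$ combined to cover the case $n=2$.

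The computation of $J$ is the \emph{main obstacle}. Parametrizing $\xi$ by an angle $\theta$ with $v_1,v_2$ placed at angles $0$ and $\varphi$, the two arcs on which the respective cosines are positive overlap in $(\varphi-\frac\pi2,\frac\pi2)$, and on this overlap there is a single crossover $\beta$ determined by $s_1\cos\beta=s_2\cos(\beta-\varphi)$, i.e.\ $\tan\beta=(s_1-s_2\cos\varphi)/(s_2\sin\varphi)$. Restricting to $s_1\le s_2$ — legitimate because both the integrand and $J$ are symmetric in $(s_1,s_2)$, so one doubles this region — and setting $t:=s_1/s_2\in[0,1]$ yields exactly $\beta=\alpha(t,\varphi)$. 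Splitting $J$ at $\beta$ and shifting $\theta\mapsto\theta-\varphi$ on the second piece then gives $J=s_2^r\,c(2,r)\,F_r(t,\varphi)$. The delicate points here are identifying the crossover with $\alpha(t,\varphi)$ and checking that $\beta$ indeed lies in the overlap arc for all $t\in[0,1]$, so that the split is valid.

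Finally I would substitute $s_1=ts_2$, integrate out $s_2$ via $\int_0^\infty s^{2n-1}e^{-Bs^r}\,ds=\frac1r\Gamma(2n/r)B^{-2n/r}$, and collect constants; using the explicit values of $c(n,r)$ and $c(2,r)$ from \eqref{eq_c_nr} together with $\frac12\omega_{n-2}\Gamma(\tfrac{n-2}2)=\pi^{(n-2)/2}$, one checks that the prefactor equals $\frac{8\pi b_{n,2}}r\Gamma(2n/r)\big(n\kappa_n r/(2\gamma c(n,r))\big)^{2n/r}$, which proves the second-moment formula. For the variance I would square the expectation from Proposition \ref{3.3} and recast it in the same integral form by evaluating $\int_0^\pi\int_0^1 t^{n-1}(\sin\varphi)^{n-2}(t^r+1)^{-2n/r}\,dt\,d\varphi$ — a Beta integral in $t$ through $w=t^r$ and then $u=w/(1+w)$, and \eqref{eqGamma} in $\varphi$ — and matching the constants via the identity $\omega_{n-1}\omega_n\sqrt\pi\,\Gamma(\tfrac{n-1}2)/\Gamma(\tfrac n2)=n^2\kappa_n^2$. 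Subtracting the two integral expressions then yields the stated formula for $\Var[V_n(Z_0)]$.
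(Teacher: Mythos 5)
Your proposal is correct and reproduces the paper's argument in all essential respects: the reduction to a two-dimensional problem, the decomposition of $S^{n-1}$ relative to the plane spanned by the two points (yielding the factor $c(n,r)/c(2,r)$, which you correctly track through the $n=2$ degeneracy), the identification of the crossover angle with $\alpha(t,\varphi)$ together with the check that it lies in the overlap arc (this is Remark \ref{3.9}\,(1)), the radial Gamma integral, and the recasting of $\big(\mathbb{E}[V_n(Z_0)]\big)^2$ as a Beta integral matched via $n^2\kappa_n^2=4\pi b_{n,2}\int_0^\pi(\sin\varphi)^{n-2}\,d\varphi$. The one point where you genuinely deviate is the first reduction: the paper invokes the linear Blaschke--Petkantschin formula (Lemma \ref{3.5}) to pass from $(\mathbb{R}^n)^2$ to a fixed $2$-plane with Jacobian $\nabla_2(x_1,x_2)^{n-2}$ and constant $b_{n,2}$, and only then evaluates the inner $t$-integral (Lemma \ref{3.8}); you instead integrate out $t$ immediately to get $\tfrac1r\max\{\langle x_1,u\rangle_+,\langle x_2,u\rangle_+\}^r$ and use plain polar coordinates plus the elementary fact that the angle between two independent uniform directions has density proportional to $(\sin\varphi)^{n-2}$, which produces the same prefactor since $8\pi b_{n,2}=2\omega_{n-1}\omega_n$. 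Your route is slightly more self-contained in that it avoids citing the Blaschke--Petkantschin theorem, at the cost of being specific to the second moment; the paper's version is the specialization of a machine that would also handle higher $k$ with $k$-dimensional sections. Both are sound, and the constants agree.
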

%---------------------------------------------------------------------------------------------------------
\begin{proof}
The formula for $\mathbb{E}[V_n(Z_0)^2]$ is implied  by the subsequent Lemma \ref{3.7}, Lemma \ref{3.8} and 
straightforward calculations. 
 The formula for $\Var[V_n(Z_0)]$ then follows if we also use the representation of $\big(\mathbb{E}[V_n(Z_0)]\big)^2$ as a double integral, i.e.
\[
\big(\mathbb{E}[V_n(Z_0)]\big)^2
%
%& = 8\pi b_{n,2}  \int\limits_0^\pi \int\limits_0^\infty \int\limits_0^1 \exp\Big[-\frac{2 \gamma c(n,r)}{n\kappa_n r}(s^r (t^r+1))\Big]\\
%&\qquad\qquad\qquad \times s^{2n-1}t^{n-1}(\sin\varphi)^{n-2}\, dt\, ds\, d\varphi.\\
%& 
= \frac{8\pi b_{n,2}}{r} \,\Gamma\left(\frac{2n}{r}\right) \left(\frac{n\kappa_n r}{2\gamma c(n,r)}\right)^{\frac{2n}{r}}\int\limits_0^\pi \int\limits_0^1   \frac{t^{n-1}}{(t^r+1)^{\frac{2n}{r}}}(\sin \varphi)^{n-2}\,dt\,d\varphi,
\]
which is a consequence of the special case $k=1$ of Proposition \ref{3.3} and of the fact that
\[
\int\limits_0^1\frac{t^{n-1}}{(t^r+1)^{\frac{2n}{r}}}\, dt = \frac{1}{2r} \frac{\Gamma(\frac{n}{r})^2}{\Gamma(\frac{2n}{r})},
\]
which follows from the substitution $z=(t^r+1)^{-1}$, the symmetry of the resulting integrand with respect to $1/2$ and basic calculations involving Beta and Gamma functions. Furthermore, we use that
\[
n^2 \kappa_n^2 = 4\pi b_{n,2} \int\limits_0^\pi(\sin\varphi)^{n-2}\, d\varphi,
\]
which follows from the definition of $\kappa_n$ and $b_{n,2}$ and from a special case of \eqref{eqGamma}.
\end{proof}

\bigskip
%<<<<<<<<<<<<<<<<<<<<<<<<<<<<<<<<<<<<<<<<<<<<<<<<<<<<<<<<<<<<<<<<<<<<<<<<<<<<<<<<<<<<<<<<<<<<<<<<<<<<<<<<<<
%\begin{figure}[h!]
\begin{figure}
\centering
\includegraphics[trim=19 0 4 0, width=400pt, height=200pt, clip=true]{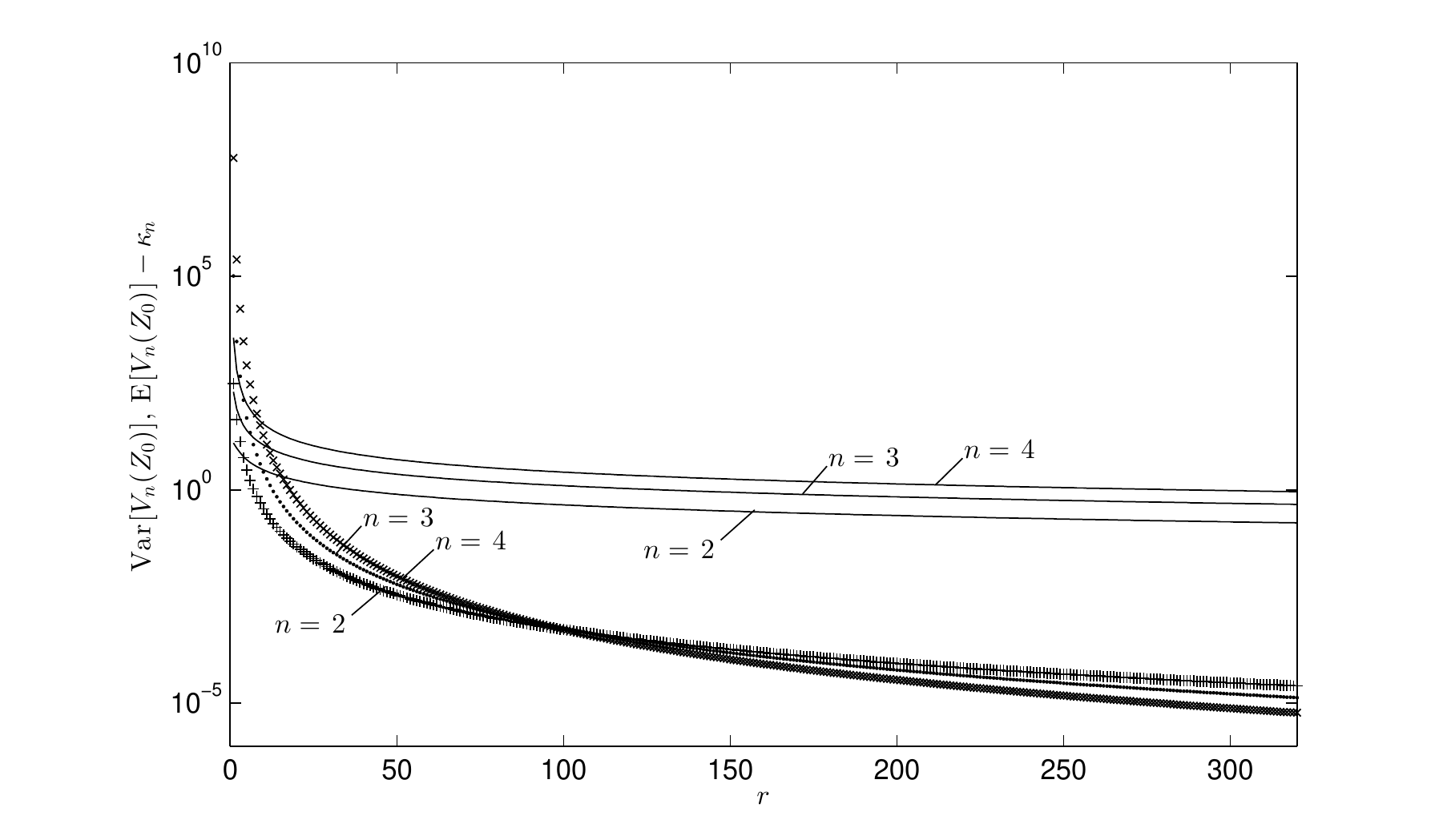}
\includegraphics[width=400pt, height=200pt]{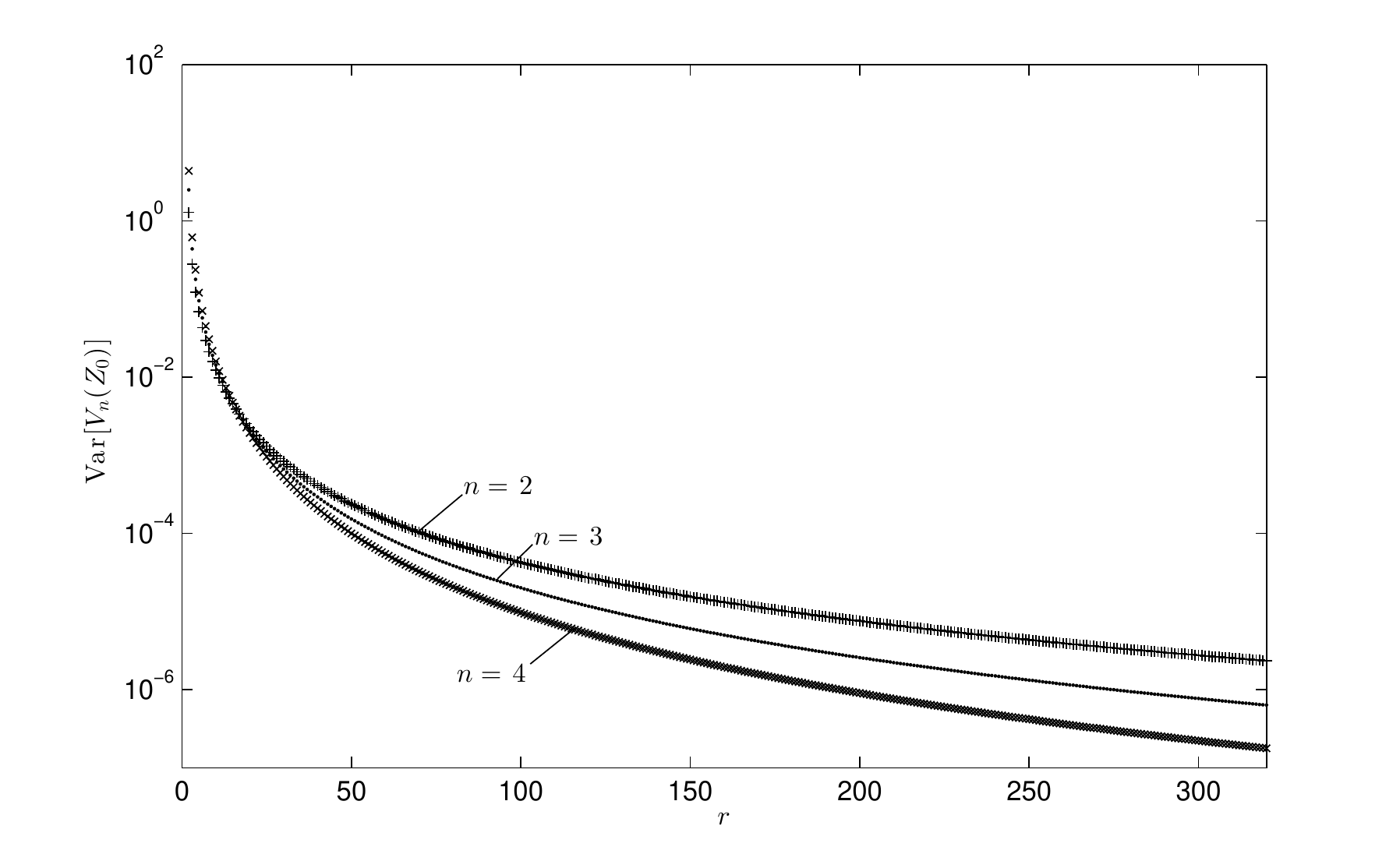}
\caption{Numerical evaluation of the formula for the variance from Theorem \ref{2} using the numerical integration functions of MATHEMATICA$^\text{\textregistered}$. In the top panel, we fix $\gamma=1$ and $\mathbb{E}[V_n(Z_0)]-\lim\limits_{r\rightarrow \infty}\mathbb{E}[V_n(Z_0)]=\mathbb{E}[V_n(Z_0)]-\kappa_n$ is plotted as a solid line for comparison. In the bottom panel, $\gamma$ is chosen in such a way 
that $\mathbb{E}[V_n(Z_0)]=1$.}\label{f1}
\end{figure}
%\newpage
%\begin{figure}[h!]
\begin{figure}
\centering
\includegraphics[width=400pt, height=200pt]{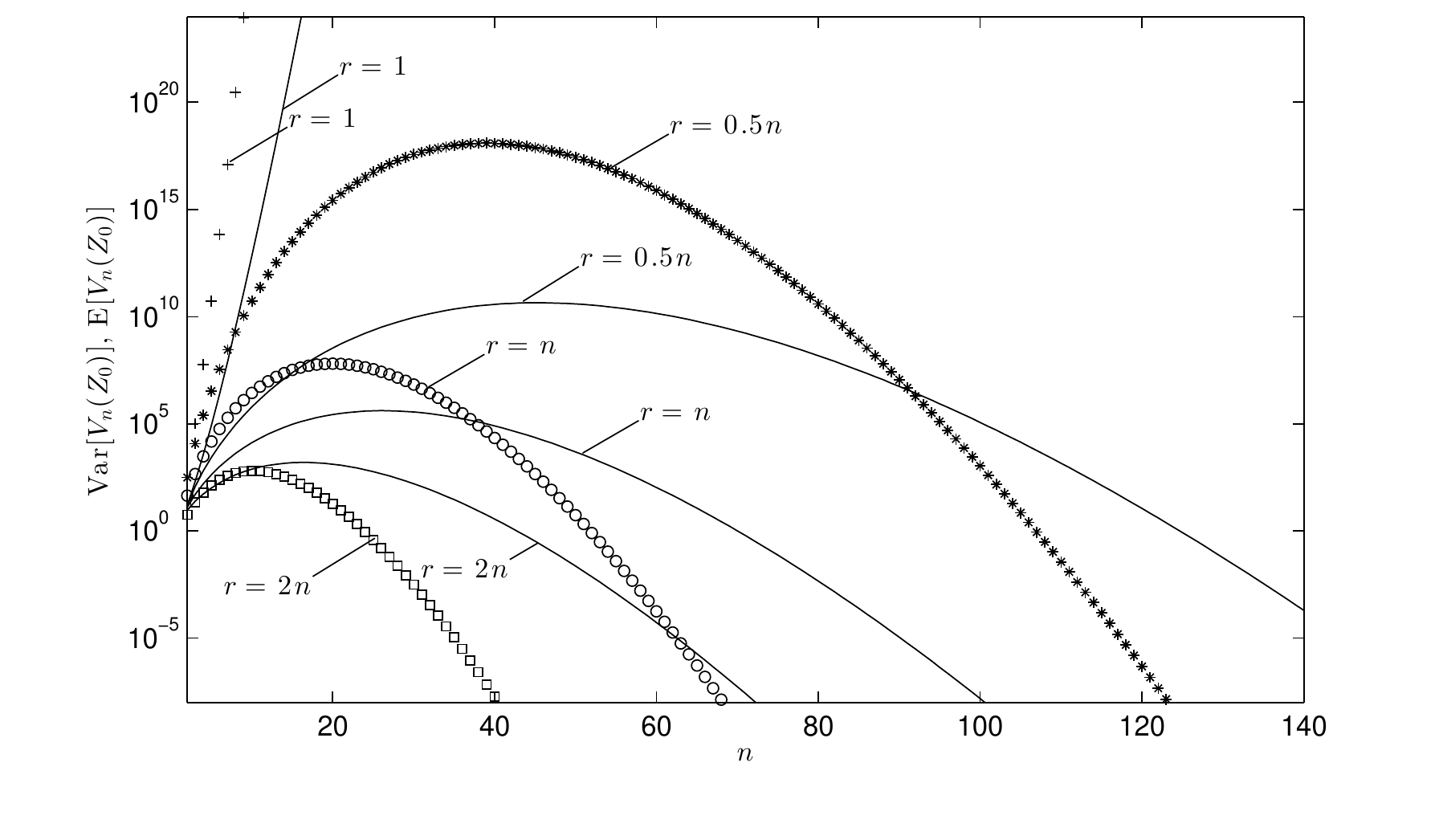}
\includegraphics[trim= 0 -15 0 15, width=400pt, height=200pt]{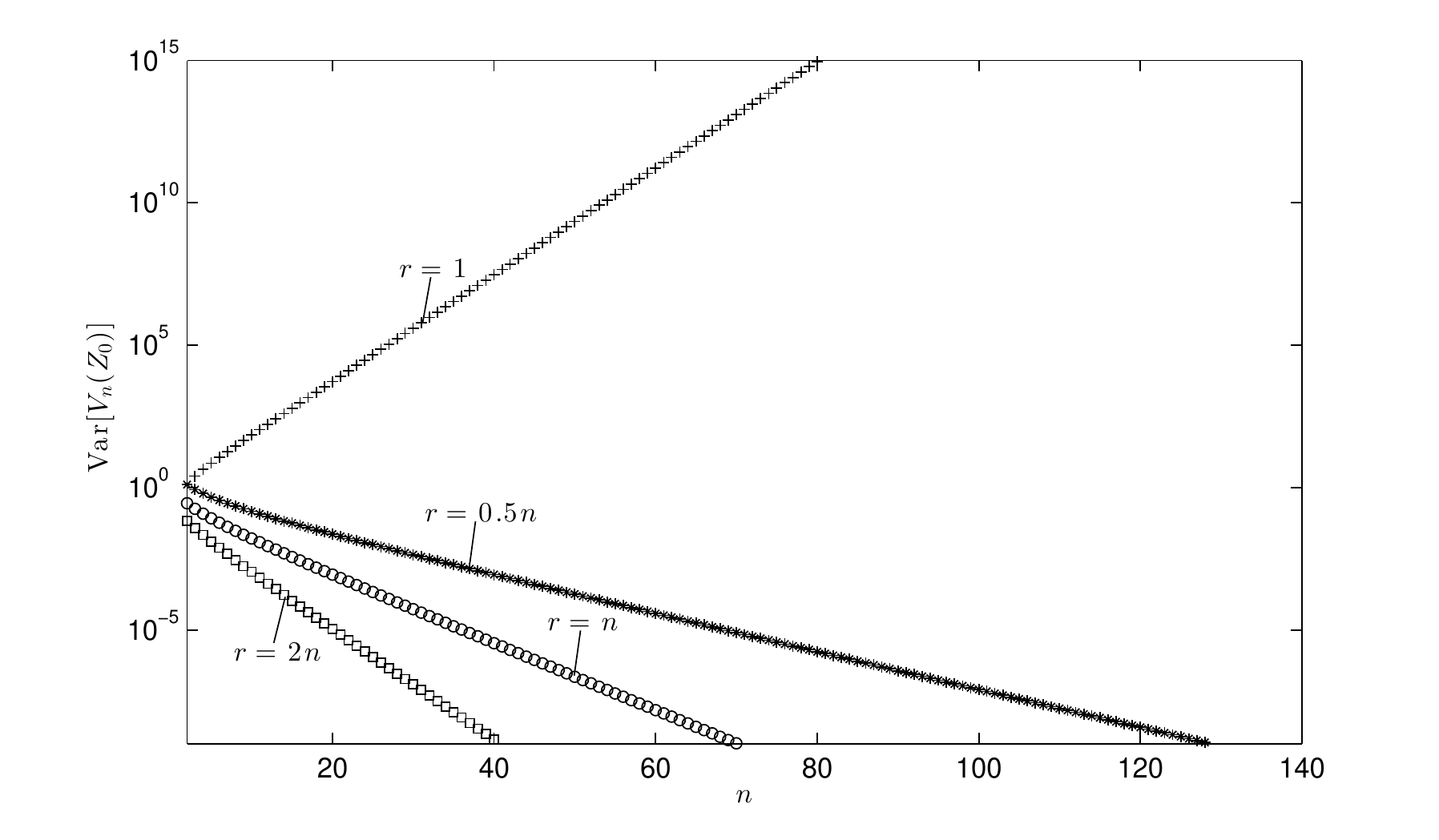}
\caption{Numerical evaluation of the formula for the variance from Theorem \ref{2} using the numerical integration functions of MATHEMATICA$^\text{\textregistered}$. In the top panel, we fix $\gamma=1$ and $\mathbb{E}[V_n(Z_0)]$ is plotted as a solid line for comparison. In the bottom panel, $\gamma$ is chosen in such a way that $\mathbb{E}[V_n(Z_0)]=1$.}\label{f2}
\end{figure}

\begin{remark}
In Figure \ref{f1} and Figure \ref{f2} numerical calculations of the variance based on Theorem \ref{2} are added.
In small dimensions ($n=2,3,4$) the numerical calculations of the variance for varying $r$  are plotted for $\gamma=1$. In this case observe that by Proposition \ref{3.3} and Stirling's formula we have $\lim\limits_{r\rightarrow \infty} \mathbb{E}[V_n(Z_0)^k]=\kappa_n^k$ for 
$k\in\mathbb{N}$ and arbitrary fixed $n\geq 2$. 
In addition we study the choice $\gamma=\frac{n\kappa_n r}{2 c(n,r)}\left(\Gamma(\frac{n}{r}+1)\kappa_n\right)^{\frac{r}{n}}$, which implies $\mathbb{E}[V_n(Z_0)]=1$ and $\lim\limits_{r\rightarrow \infty} \mathbb{E}[V_n(Z_0)^k]=1$ for $k\in\mathbb{N}$ and arbitrary fixed $n \geq 2$ by Proposition \ref{3.3}.
Theorem \ref{3.10} and Lemma \ref{3.12}, which will be proved in the following section, and Stirling's formula show that for both choices of $\gamma$ we obtain 
\[\Var[V_n(Z_0)] \leq C\, r^{-\frac{n+1}{2}},\]
where $C>0$ is a constant depending on $\gamma$ and $n$ in the first case and only on $n$ in the second. 
On the other hand, for specific choices of $r$ ($r=1$, $r=0.5\,n$, $r=n$, $r=2n$) the numerically determined values of the variance for varying dimension $n$ are plotted for $\gamma=1$ and $\gamma=\frac{n\kappa_n r}{2 c(n,r)}\left(\Gamma(\frac{n}{r}+1)\kappa_n\right)^{\frac{r}{n}}$. The high-dimensional limiting behaviour of the moments and the variance is studied in the following section.
\end{remark}

%<<<<<<<<<<<<<<<<<<<<<<<<<<<<<<<<<<<<<<<<<<<<<<<<<<<<<<<<<<<<<<<<<<<<<<<<<<<
In the following remark, we collect some facts which are helpful for a proper understanding of the formulas for the second moment and the variance of the volume of the zero cell in Theorem \ref{2} and which are used several times subsequently.

\begin{remark}\label{3.9}
\begin{enumerate}
\item[(1)] Let $\varphi \in (0,\pi)$ and $t\in[0,1]$.  Then
\[
-\frac{\pi}{2} < \varphi-\frac{\pi}{2}=\alpha(0,\varphi) \leq \alpha(t,\varphi) \leq \alpha(1,\varphi) = \frac{\varphi}{2}<\frac{\pi}{2},
\]
since $t\mapsto \alpha(t,\varphi)$ is  increasing on $[0,1]$.
%---------------------------------
\item[(2)] A special case of \eqref{eqGamma} is
\[
\int\limits_{-\frac{\pi}{2}}^{\frac{\pi}{2}}(\cos\theta)^r \,d\theta = c(2,r).
\]
%
%---------------------
\end{enumerate}
\end{remark}

\bigskip

The following lemmas lead successively to the explicit formulas for the second moment and the variance of the volume of the zero cell stated in Theorem \ref{2}. Lemma \ref{3.5} and Lemma \ref{3.6} are needed to prove Lemma \ref{3.7}, whereas Lemma \ref{3.7} and Lemma \ref{3.8} have been used directly in the proof of Theorem \ref{2}.
In a first step,  the integral representation of the second moment of $V_n(Z_0)$ from \eqref{kthmoment} will be simplified considerably by an application of a Blaschke-Petkantschin formula (cf.\  \cite{Sto}).
For  $x_1,x_2 \in \mathbb{R}^n$, let $\nabla_2(x_1,x_2)$ denote the area of the parallelogram spanned by these vectors. Further, denote by 
$\nu_2^n$ the unique rotation invariant  probability measure on $G(n,2)$. As usual, $\{e_1,\ldots,e_n\}$ denotes the standard basis of $\mathbb{R}^n$. 
%<<<<<<<<<<<<<<<<<<<<<<<<<<<<<<<<<<<<<<<<<<<<<<<<<<<<<<<<<<<<<<<<<<<<<<<<<<<<<<<<<<<<<<<<<<<<<<<<
\begin{lemma}\label{3.5}
We have
\begin{eqnarray*}
\mathbb{E}[V_n(Z_0)^2] &=& 
b_{n,2} \int\limits_{\spa\{e_1,e_2\}^2}\exp\Big[-\frac{2\gamma}{n\kappa_n}
\int\limits_{S^{n-1}}\int\limits_0^\infty\mathbf{1}
\{H(u,t)\cap [o,x_1,x_2]\neq \emptyset\}t^{r-1}\\
&&\qquad\qquad\qquad\times \,dt\,\mathcal{H}^{n-1}(du)\Big]\nabla_2(x_1,x_2)^{n-2}\,
\lambda^2_{\spa\{e_1,e_2\}}(d(x_1,x_2)).
\end{eqnarray*}
\end{lemma}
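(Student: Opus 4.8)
The plan is to start from the integral representation of the second moment given in \eqref{kthmoment} for $k=2$, namely
\[
\mathbb{E}[V_n(Z_0)^2]
= \int\limits_{(\mathbb{R}^n)^2}\exp\Big[-\frac{2\gamma}{n\kappa_n} \int\limits_{S^{n-1}}\int\limits_0^\infty
\mathbf{1}\{H(u,t)\cap [o,x_1,x_2] \neq \emptyset\}\,t^{r-1}\,dt\,\mathcal{H}^{n-1}(du)\Big]\,dx_1\,dx_2,
\]
and to rewrite the double integral over $(\mathbb{R}^n)^2$ using a linear (affine) Blaschke--Petkantschin formula. The idea is that the integrand depends on $(x_1,x_2)$ only through the pair $[o,x_1,x_2]$, and hence only through the relative position of $x_1,x_2$ inside the $2$-plane they span together with $o$. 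The Blaschke--Petkantschin formula for a $2$-flat through $o$ (equivalently, a $2$-dimensional linear subspace) decomposes Lebesgue measure on $(\mathbb{R}^n)^2$ as an integral over $E\in G(n,2)$ of Lebesgue measure $\lambda_E^2$ on the points, weighted by a Jacobian factor proportional to $\nabla_2(x_1,x_2)^{n-2}$, against the invariant measure $\nu_2^n$ on $G(n,2)$.

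Concretely, I would invoke the version of the Blaschke--Petkantschin formula (as found in \cite{Sto}) which states that for a nonnegative measurable $f$,
\[
\int\limits_{(\mathbb{R}^n)^2} f(x_1,x_2)\,d(x_1,x_2)
= c \int\limits_{G(n,2)} \int\limits_{E^2} f(x_1,x_2)\,\nabla_2(x_1,x_2)^{n-2}\,\lambda_E^2(d(x_1,x_2))\,\nu_2^n(dE),
\]
for the appropriate constant $c$ arising from the ratio of surface-area constants $\omega_k$. The next step is to exploit the rotation invariance of the whole setup: the intensity measure $\Theta$ in \eqref{intmeas} is rotation invariant, so the inner exponential (as a function of the configuration $[o,x_1,x_2]$) is invariant under any rotation applied simultaneously to $x_1$ and $x_2$. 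Therefore the inner integral $\int_{E^2}(\cdots)\,\lambda_E^2$ takes the same value for every $E\in G(n,2)$, and I can replace the integration over $G(n,2)$ against the probability measure $\nu_2^n$ by evaluation at the fixed coordinate plane $E_0 := \spa\{e_1,e_2\}$. This collapses the Grassmannian integral and leaves exactly the claimed integral over $\spa\{e_1,e_2\}^2$, with the factor $\nabla_2(x_1,x_2)^{n-2}$. Finally I would identify the resulting constant $c$ with $b_{n,2} = \frac{\omega_{n-1}\omega_n}{4\pi}$ by matching the precise normalizing constant in the chosen version of the Blaschke--Petkantschin formula, carried out as a short computation with the $\omega_k$ and $\kappa_k$.

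I expect the main obstacle to be bookkeeping of the normalizing constant rather than any conceptual difficulty: different sources state the Blaschke--Petkantschin formula with slightly different placements of the factorials and sphere volumes, so the care lies in selecting the correct constant so that it reduces to $b_{n,2}=\omega_{n-1}\omega_n/(4\pi)$, and in making sure the Jacobian exponent $n-2$ matches (it should, since for a $2$-plane in $\mathbb{R}^n$ the exponent is $n-q$ with $q=2$). A secondary technical point is verifying that the integrand is genuinely a function of the configuration $[o,x_1,x_2]$ alone and is rotation invariant as a function on $(\mathbb{R}^n)^2$, so that the value of the inner integral is independent of $E$; this follows directly from the rotation invariance of $\Theta$ noted after \eqref{intmeas}, but it must be stated explicitly to justify replacing the $\nu_2^n$-average by the value on $\spa\{e_1,e_2\}$.
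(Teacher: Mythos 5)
Your proposal is correct and follows essentially the same route as the paper: the linear Blaschke--Petkantschin formula \cite[Theorem 7.2.1]{Sto}, whose normalizing constant for $q=2$ is exactly $b_{n,2}=\omega_{n-1}\omega_n/(\omega_1\omega_2)=\omega_{n-1}\omega_n/(4\pi)$, followed by the observation that rotation invariance of the intensity measure makes the inner integral independent of the subspace $E$, so the $\nu_2^n$-average collapses to evaluation at $\spa\{e_1,e_2\}$. The paper carries out this invariance step by parametrizing $G(n,2)$ via $SO_n$ acting on the fixed plane and transferring the rotation onto the sphere variable $u$, but that is the same idea in slightly different clothing.
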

%------------------------------------------------------------------------------------------------
\begin{proof}
From \eqref{kthmoment}, the linear Blaschke-Petkantschin formula \cite[Theorem 7.2.1]{Sto}, 
the rotation invariance of spherical Lebesgue measure and the invariance of $\nabla_2(\cdot,\cdot)$, 
we obtain that
\begin{align*}
& \mathbb{E}[V_n(Z_0)^2] \\
& = b_{n,2}\int\limits_{G(n,2)} \int\limits_{L^2} \exp\Big[-\frac{2\gamma}{n\kappa_n} \int\limits_{S^{n-1}} \int\limits_0^\infty\mathbf{1}\{H(u,t)\cap
[o,x_1,x_2]\neq\emptyset\}\\ 
& \qquad\qquad  \times t^{r-1}\,dt\,\mathcal{H}^{n-1}(du)\Big]\nabla_2(x_1,x_2)^{n-2}\,\lambda_L^2(d(x_1,x_2))\,\nu_2^n(dL)\\
& = b_{n,2}\int\limits_{SO_n}\int\limits_{\spa\{e_1,e_2\}^2}\exp\Big[-\frac{2\gamma}{n\kappa_n} \int\limits_{S^{n-1}}\int\limits_0^\infty \mathbf{1} \{H(u,t) \cap
[o,\vartheta x_1, \vartheta x_2]\neq \emptyset\}\\
& \qquad\qquad \times t^{r-1}\,dt \,\mathcal{H}^{n-1}(du)\Big] \nabla_2(\vartheta x_1,\vartheta x_2)^{n-2}\, \lambda^2_{\spa\{e_1,e_2\}}(d(x_1,x_2)) \,\nu_n(d\vartheta)\\
& = b_{n,2}\int\limits_{SO_n}\int\limits_{\spa\{e_1,e_2\}^2}\exp\Big[-\frac{2\gamma}{n\kappa_n} \int\limits_{S^{n-1}}\int\limits_0^\infty \mathbf{1}\{ H(\vartheta u, t) \cap [o, \vartheta x_1, \vartheta x_2] \neq \emptyset\}\\
& \qquad\qquad \times t^{r-1}\,dt\, \mathcal{H}^{n-1}(du)\Big] \nabla_2( x_1,  x_2)^{n-2}\, \lambda^2_{\spa\{e_1,e_2\}}(d(x_1,x_2))\,\nu_n(d\vartheta)\\
& = b_{n,2}\int\limits_{\spa\{e_1,e_2\}^2}\exp\Big[-\frac{2\gamma}{n\kappa_n}
\int\limits_{S^{n-1}}\int\limits_0^\infty \mathbf{1} \{H(u,t) \cap
[o,x_1,x_2]\neq\emptyset\}\\ 
& \qquad\qquad \times t^{r-1}\,dt\, \mathcal{H}^{n-1}(du)\Big] \nabla_2(x_1,x_2)^{n-2}\, \lambda^2_{\spa\{e_1,e_2\}}(d(x_1,x_2)),
\end{align*}
which yields the assertion of the lemma. 
\end{proof}
\bigskip

Next we simplify the inner double integral of the expression which was derived in Lemma \ref{3.5}
for $\mathbb{E}[V_n(Z_0)^2]$ by exploiting further the symmetry of the situation.
%<<<<<<<<<<<<<<<<<<<<<<<<<<<<<<<<<<<<<<<<<<<<<<<<<<<<<<<<<<<<<<<<<<<<<<<<<<<<<<<<<<<<<<<<<<<<<<<<
\begin{lemma}\label{3.6}
For $x_1, x_2 \in \spa\{e_1,e_2\} \subset \mathbb{R}^n$, we have
\begin{align*}
& \int\limits_{S^{n-1}}\int\limits_0^\infty\mathbf{1}\{H(u,t)\cap [o,x_1,x_2]\neq \emptyset\}t^{r-1}\, dt\,\mathcal{H}^{n-1}(du)\\
& =
 \frac{c(n,r)}{c(2,r)} \int\limits_0^{2\pi}\int\limits_0^\infty\mathbf{1}\big\{H(\begin{pmatrix}\cos \theta \\
\sin \theta\end{pmatrix},t) \cap [o,x_1,x_2] \neq \emptyset\big\} t^{r-1}\, dt\, d\theta.
\end{align*}
\end{lemma}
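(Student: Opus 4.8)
The plan is to exploit that the triangle $[o,x_1,x_2]$ lies in the plane $E:=\spa\{e_1,e_2\}$, so that whether a hyperplane $H(u,t)$ meets it depends on $u$ only through its orthogonal projection onto $E$. Write $u_E$ for this projection. Since every $y\in E$ is orthogonal to $E^\perp$, one has $\langle y,u\rangle=\langle y,u_E\rangle$, and hence $H(u,t)\cap[o,x_1,x_2]\neq\emptyset$ if and only if some $y$ in the triangle satisfies $\langle y,u_E\rangle=t$. For $u_E\neq o$ I would set $\rho:=\|u_E\|$ and $w:=u_E/\rho\in S^1_E$, so that this condition reads $\langle y,w\rangle=t/\rho$, i.e.\ $H_E(w,t/\rho)\cap[o,x_1,x_2]\neq\emptyset$. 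The set of $u$ with $u_E=o$ (that is, $u\in E^\perp\cap S^{n-1}$) is $\mathcal{H}^{n-1}$-null and, since $t>0$, contributes nothing, so it may be discarded.

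Next I would decompose spherical Lebesgue measure along the splitting $\mathbb{R}^n=E\oplus E^\perp$ (with $\dim E=2$ and $\dim E^\perp=n-2$) as in \cite[(1.41)]{mueller}: writing $u=\cos\psi\,w+\sin\psi\,v$ with $w\in S^1_E$, $v\in S^{n-3}_{E^\perp}$ and $\psi\in[0,\frac{\pi}{2}]$, one has $\mathcal{H}^{n-1}(du)=\cos\psi\,(\sin\psi)^{n-3}\,d\psi\,\mathcal{H}^1(dw)\,\mathcal{H}^{n-3}(dv)$ and $\rho=\|u_E\|=\cos\psi$. By the previous step the integrand on the left-hand side equals $\mathbf{1}\{H_E(w,t/\cos\psi)\cap[o,x_1,x_2]\neq\emptyset\}$, which is independent of $v$; integrating $v$ over $S^{n-3}_{E^\perp}$ therefore produces the factor $\omega_{n-2}$.

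The remaining work is a change of variables in the $t$-integral. For fixed $w$ and $\psi$ I would substitute $s=t/\cos\psi$, which turns the inner integral into $(\cos\psi)^r\,J(w)$, where $J(w):=\int_0^\infty\mathbf{1}\{H_E(w,s)\cap[o,x_1,x_2]\neq\emptyset\}\,s^{r-1}\,ds$ is exactly the circle integrand on the right-hand side, since $[o,x_1,x_2]\subset E$ gives $H(w,t)\cap[o,x_1,x_2]=H_E(w,t)\cap[o,x_1,x_2]$. The left-hand side then factors as
\[
\omega_{n-2}\left(\int_0^{\frac{\pi}{2}}(\cos\psi)^{r+1}(\sin\psi)^{n-3}\,d\psi\right)\int\limits_{S^1_E}J(w)\,\mathcal{H}^1(dw),
\]
and \eqref{eqGamma} evaluates the $\psi$-integral to $\frac{1}{2}\Gamma(\frac{r+2}{2})\Gamma(\frac{n-2}{2})/\Gamma(\frac{r+n}{2})$. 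It then remains to check that $\omega_{n-2}\cdot\frac{1}{2}\Gamma(\frac{r+2}{2})\Gamma(\frac{n-2}{2})/\Gamma(\frac{r+n}{2})$ equals $c(n,r)/c(2,r)$; inserting $\omega_{n-2}=2\pi^{(n-2)/2}/\Gamma(\frac{n-2}{2})$ and the explicit values from \eqref{eq_c_nr} reduces both sides to $\pi^{(n-2)/2}\Gamma(\frac{r+2}{2})/\Gamma(\frac{r+n}{2})$, and rewriting $\int_{S^1_E}J(w)\,\mathcal{H}^1(dw)$ as $\int_0^{2\pi}\cdots\,d\theta$ finishes the proof.

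The main obstacle is getting the measure decomposition and its exponents right and tracking the constants so that they collapse to precisely $c(n,r)/c(2,r)$. One should note that the Beta-type integral requires $n\ge 3$ (for $n=3$ the factor $(\sin\psi)^{n-3}$ is trivial and $S^{n-3}_{E^\perp}=S^0_{E^\perp}$ reduces to counting measure on two points), whereas for $n=2$ one has $E=\mathbb{R}^n$ and the asserted identity is immediate because $c(2,r)/c(2,r)=1$ and $\int_{S^1}=\int_0^{2\pi}$.
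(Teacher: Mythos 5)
Your proposal is correct and follows essentially the same route as the paper: the same decomposition of spherical Lebesgue measure along $E\oplus E^\perp$ with Jacobian $\cos\psi\,(\sin\psi)^{n-3}$, the same reduction $H(u,t)\cap E=H_E(w,t/\cos\psi)$, the same substitution in $t$, and the same collapse of $\omega_{n-2}$ times the Beta integral to $c(n,r)/c(2,r)$. The only differences are cosmetic (you make the null set $\{u_E=o\}$ and the final Gamma-function bookkeeping explicit, which the paper leaves implicit).
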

%-------------------------------------------------------------------------------------
\begin{proof} For $n=2$ there is nothing to prove. Hence we can assume that $n\ge 3$. 
Let $E:= \spa\{e_1,e_2\}$. The map
\[
F: \begin{cases}\begin{array}{r c l}
S^1_E\times (0,\frac{\pi}{2})\times S^{n-3}_{E^\bot} & \rightarrow &S^{n-1},\\
(u_1,\theta,u_2) & \mapsto& \cos(\theta) u_1 + \sin(\theta)u_2,
\end{array}
\end{cases}
\]
is injective and its image covers $S^{n-1}$ up to a set of measure zero.
Its Jacobian is
\[
J F(u_1,\theta,u_2) = \cos(\theta) (\sin(\theta))^{n-3},
\]
and hence the  area-coarea formula (cf.\ \cite[Theorem 3.2.22]{GM}) yields that
\begin{align*}
& \int\limits_{S^{n-1}}\int\limits_0^\infty\mathbf{1}\{H(u,t)\cap  [o,x_1,x_2]\neq \emptyset\}t^{r-1}\, dt\,\mathcal{H}^{n-1}(du)\\
& = \int\limits_{S^1_E}\int\limits_0^{\frac{\pi}{2}}\int\limits_{S^{n-3}_{E^\bot}}
\int\limits_0^\infty\mathbf{1}\{H(\cos(\theta)u_1+\sin(\theta)u_2,t)\cap [o,x_1,x_2]\neq \emptyset\} \\
& \qquad\qquad \times \cos(\theta) (\sin \theta )^{n-3}t^{r-1}\, dt\, \mathcal{H}^{n-3}(du_2)\,
d\theta \, \mathcal{H}^1(du_1).
\end{align*}
Since $[o,x_1,x_2] \subset E$ and
\[
H(\cos(\theta)u_1+\sin(\theta)u_2,t)\cap E = H_E\left(u_1,\frac{t}{\cos \theta }\right),
\]
for $\theta\in [0,\pi/2)$, we get
\begin{align*}
& \int\limits_{S^{n-1}}\int\limits_0^\infty\mathbf{1}\{H(u,t)\cap  [o,x_1,x_2]\neq \emptyset\}t^{r-1}\, dt\,\mathcal{H}^{n-1}(du)\\
& = (n-2)\kappa_{n-2} \int\limits_{S^1_E}\int\limits_0^{\frac{\pi}{2}}
\int\limits_0^\infty\mathbf{1}\{H_E\left(u_1,\frac{t}{\cos \theta }\right)\cap [o,x_1,x_2]\neq \emptyset\}\\
& \qquad\qquad \times \cos(\theta) (\sin\theta)^{n-3} t^{r-1}\, dt\,
d\theta \, \mathcal{H}^1(du_1)\\
& = (n-2)\kappa_{n-2}\int\limits_{S^1_E}\int\limits_0^{\frac{\pi}{2}}
\int\limits_0^\infty\mathbf{1}\{H_E(u_1,t)\cap [o,x_1,x_2]\neq \emptyset\} \\
& \qquad\qquad \times (\cos\theta)^{r+1} (\sin\theta)^{n-3} t^{r-1}\, dt\,
d\theta \, \mathcal{H}^1(du_1)\\
& = (n-2)\kappa_{n-2} \int\limits_0^{\frac{\pi}{2}}(\cos\theta)^{r+1}(\sin\theta)^{n-3}d\theta\, \\
& \qquad\qquad \times
\int\limits_{S^1}\int\limits_0^\infty\mathbf{1}\{H(u_1,t)\cap [o,x_1,x_2]\neq \emptyset\} t^{r-1}\,dt\,\mathcal{H}^1(du_1)\\
& = \frac{c(n,r)}{c(2,r)}
\int\limits_0^{2\pi}\int\limits_0^\infty\mathbf{1}\big\{H\big(\begin{pmatrix}\cos\theta\\ \sin\theta\end{pmatrix},t\big) \cap [o,x_1,x_2] \neq \emptyset\big\} t^{r-1}\,
dt\, d\theta,
\end{align*}
which completes the proof of the lemma. 
\end{proof}

\bigskip

%<<<<<<<<<<<<<<<<<<<<<<<<<<<<<<<<<<<<<<<<<<<<<<<<<<<<<<<<<<<<<<<<<<<<<<<<<<<<<<<<<<<<<
Having simplified the inner integral of the expression found 
in Lemma \ref{3.5} for $\mathbb{E}[V_n(Z_0)^2]$, we reduce the 
outer integral in the next lemma by again taking advantage of the problem's symmetry.

\begin{lemma}\label{3.7}
We have
\begin{align*}
\mathbb{E}[V_n(Z_0)^2] = &\, 8\pi b_{n,2} \int\limits_0^\pi \int\limits_0^\infty\int\limits_0^s \exp\Big[-\frac{2\gamma}{n\kappa_n} \frac{c(n,r)}{c(2,r)} \\
& \qquad\times\int\limits_0^{2\pi}\int\limits_0^\infty
\mathbf{1}\Big\{H\big(\begin{pmatrix}\cos\theta \\ \sin \theta \end{pmatrix},t\big)\cap
\big[o,s\begin{pmatrix}\cos\varphi\\ \sin\varphi\end{pmatrix},\begin{pmatrix}u\\ 0\end{pmatrix}\big]\neq \emptyset\Big\} t^{r-1}dt\, d\theta\Big]\\
& \qquad \times \,s^{n-1}u^{n-1} (\sin\varphi)^{n-2} \, du\, ds\, d\varphi.
\end{align*}
\end{lemma}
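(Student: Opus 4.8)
The plan is to insert the simplified inner double integral from Lemma~\ref{3.6} into the expression for $\mathbb{E}[V_n(Z_0)^2]$ supplied by Lemma~\ref{3.5}, and then to pass to polar coordinates in the plane $E:=\spa\{e_1,e_2\}$ for \emph{both} vectors $x_1,x_2$. Abbreviating the inner integral over hyperplane directions by
\[
I([o,x_1,x_2]):=\int\limits_0^{2\pi}\int\limits_0^\infty\mathbf 1\Big\{H\big(\begin{pmatrix}\cos\theta\\ \sin\theta\end{pmatrix},t\big)\cap[o,x_1,x_2]\neq\emptyset\Big\}\,t^{r-1}\,dt\,d\theta,
\]
Lemmas~\ref{3.5} and \ref{3.6} together give
\[
\mathbb{E}[V_n(Z_0)^2]=b_{n,2}\int\limits_{E^2}\exp\Big[-\frac{2\gamma}{n\kappa_n}\frac{c(n,r)}{c(2,r)}\,I([o,x_1,x_2])\Big]\nabla_2(x_1,x_2)^{n-2}\,\lambda^2_E(d(x_1,x_2)).
\]
Writing $x_i=r_i(\cos\varphi_i,\sin\varphi_i)$ in the coordinates of $E$, I would use that $\lambda^2_E(d(x_1,x_2))=r_1r_2\,dr_1\,dr_2\,d\varphi_1\,d\varphi_2$ and $\nabla_2(x_1,x_2)=r_1r_2\,|\sin(\varphi_1-\varphi_2)|$, so that the combined radial--angular weight becomes $(r_1r_2)^{n-1}|\sin(\varphi_1-\varphi_2)|^{n-2}$.

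The crucial structural point is that the exponent depends on the configuration only through $I([o,x_1,x_2])$, and that $I$ is invariant under every orthogonal transformation $\rho$ of $E$ applied simultaneously to $x_1$ and $x_2$, as well as under interchanging $x_1$ and $x_2$. Invariance under interchange is immediate from $[o,x_1,x_2]=[o,x_2,x_1]$. Invariance under $\rho$ follows from the identity $\mathbf 1\{H(\rho u_\theta,t)\cap[o,\rho x_1,\rho x_2]\neq\emptyset\}=\mathbf 1\{H(u_\theta,t)\cap[o,x_1,x_2]\neq\emptyset\}$ with $u_\theta=(\cos\theta,\sin\theta)$, together with the fact that $\theta\mapsto\rho u_\theta$ merely reparametrizes the full circle, so that $d\theta$ is preserved while $t^{r-1}\,dt$ is untouched; hence $I([o,\rho x_1,\rho x_2])=I([o,x_1,x_2])$.

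These invariances drive three successive reductions. First, rotation invariance lets me integrate out the overall rotation angle: the integrand depends on $\varphi_1,\varphi_2$ only through $\varphi:=\varphi_1-\varphi_2$, so integrating the remaining angle over $[0,2\pi)$ produces a factor $2\pi$ and places $x_2$ on the positive $e_1$-axis, $x_2=(u,0)$, and $x_1=s(\cos\varphi,\sin\varphi)$ with $\varphi\in[0,2\pi)$ and $s,u>0$, leaving
\[
2\pi b_{n,2}\int\limits_0^{2\pi}\int\limits_0^\infty\int\limits_0^\infty \exp\Big[-\frac{2\gamma}{n\kappa_n}\frac{c(n,r)}{c(2,r)}\,I\big([o,s(\cos\varphi,\sin\varphi),(u,0)]\big)\Big] s^{n-1}u^{n-1}|\sin\varphi|^{n-2}\,ds\,du\,d\varphi.
\]
Second, the reflection $e_2\mapsto-e_2$ sends $\varphi\mapsto 2\pi-\varphi$ and fixes the integrand, so $\int_0^{2\pi}=2\int_0^\pi$, where now $|\sin\varphi|=\sin\varphi$, contributing a factor $2$. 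Third, the interchange symmetry of $I$ becomes, at fixed $\varphi$, invariance under $s\leftrightarrow u$ (realized by a rotation through $-\varphi$ followed by the reflection $e_2\mapsto-e_2$), so that $\int_0^\infty\int_0^\infty\,ds\,du=2\int_0^\infty\int_0^s\,du\,ds$, contributing a further factor $2$. Collecting the constants $b_{n,2}\cdot2\pi\cdot2\cdot2=8\pi b_{n,2}$ yields exactly the claimed formula.

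I expect the main obstacle to be the careful verification of the invariances of $I$, in particular confirming that the interchange symmetry really corresponds to $s\leftrightarrow u$ at fixed $\varphi$ after returning to the standard parametrization; once these symmetries are secured, the remaining steps are bookkeeping of the polar Jacobians and the powers of two.
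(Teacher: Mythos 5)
Your proposal is correct and follows essentially the same route as the paper's proof: both insert Lemma \ref{3.6} into Lemma \ref{3.5}, pass to polar coordinates in $\spa\{e_1,e_2\}$ with the weight $(su)^{n-1}|\sin(\varphi-\psi)|^{n-2}$, and then harvest the three symmetries (simultaneous rotation giving the factor $2\pi$, reflection halving the angular range, and the $s\leftrightarrow u$ interchange) to arrive at $8\pi b_{n,2}$; the only difference is the order in which these reductions are applied, which is immaterial.
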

%--------------------------------------------------------------------------
\begin{proof}
Combining Lemma \ref{3.5} and Lemma \ref{3.6} and introducing polar coordinates, we get
\begin{align*}
& \mathbb{E}[V_n(Z_0)^2] \\
& = b_{n,2} \int\limits_0^{2\pi}\int\limits_0^{2\pi}\int\limits_0^\infty\int\limits_0^\infty
\exp\Big[-\frac{2\gamma}{n\kappa_n} \frac{c(n,r)}{c(2,r)}\\
& \qquad \times \int\limits_0^{2\pi}\int\limits_0^\infty \mathbf{1}\Big\{H\big(\begin{pmatrix}\cos\theta\\ \sin\theta\end{pmatrix},t\big) \cap
\big[o, s\begin{pmatrix}\cos\varphi\\ \sin\varphi\end{pmatrix}, u\begin{pmatrix}\cos\psi\\ \sin\psi\end{pmatrix}\big] \neq\emptyset\Big\} t^{r-1}\,dt\, d\theta\Big]\\
& \qquad \times u s |\sin(\varphi-\psi)us|^{n-2}\,du\, ds\, d\varphi\, d\psi\\
& = b_{n,2} \int\limits_0^{2\pi}\int\limits_0^{2\pi}2\int\limits_0^\infty\int\limits_0^s
\exp\Big[-\frac{2\gamma}{n\kappa_n} \frac{c(n,r)}{c(2,r)}\\
& \qquad \times \int\limits_0^{2\pi}\int\limits_0^\infty \mathbf{1}\Big\{H\big(\begin{pmatrix}\cos\theta\\ \sin\theta\end{pmatrix},t\big) \cap
\big[o, s\begin{pmatrix}\cos\varphi\\ \sin\varphi\end{pmatrix}, u\begin{pmatrix}\cos\psi\\ \sin\psi\end{pmatrix}\big] \neq\emptyset\Big\} t^{r-1}\,dt\, d\theta\Big]\\
& \qquad \times |\sin(\varphi-\psi)|^{n-2} (us)^{n-1}\,du\, ds\, d\varphi\, d\psi,\\
\end{align*}
where the symmetry in $u$ and $s$ is used to justify the second equality.
Hence we derive
\begin{align*}
& \mathbb{E}[V_n(Z_0)^2]\\
& = 2b_{n,2}\int\limits_0^{2\pi}\int\limits_0^{2\pi}\int\limits_0^\infty\int\limits_0^s \exp\Big[-\frac{2\gamma }{n\kappa_n} \frac{c(n,r)}{c(2,r)}\\
&\qquad\times \int\limits_0^{2\pi}\int\limits_0^\infty \mathbf{1} \Big\{H\big(\begin{pmatrix}\cos(\theta-\psi)\\ \sin(\theta- \psi)\end{pmatrix},t\big) \cap \big[o, s \begin{pmatrix}\cos(\varphi-\psi)\\ \sin(\varphi-\psi)\end{pmatrix},
\begin{pmatrix}u\\ 0\end{pmatrix}\big] \neq \emptyset\Big\} t^{r-1}\,dt\, d\theta\Big]\\
& \qquad \times u^{n-1} s^{n-1} |\sin(\varphi-\psi)|^{n-2} \,du\, ds\, d\varphi\, d\psi\\
& = 4\pi b_{n,2} \int\limits_0^{2\pi} \int\limits_0^\infty \int\limits_0^s \exp\Big[-\frac{2\gamma }{n\kappa_n}
\frac{c(n,r)}{c(2,r)}\\
& \qquad \times \int\limits_0^{2\pi} \int\limits_0^\infty \mathbf{1} \Big\{ H\big(\begin{pmatrix}\cos\theta\\ \sin\theta\end{pmatrix},t\big) \cap \big[o,s \begin{pmatrix}\cos\varphi\\ \sin\varphi \end{pmatrix}, \begin{pmatrix} u\\ 0 \end{pmatrix}\big] \neq \emptyset\Big\} t^{r-1}\,dt\, d\theta\Big]\\
& \qquad \times u^{n-1} s^{n-1} |\sin\varphi|^{n-2} \,du\, ds\, d\varphi\\
& =8\pi b_{n,2}   \int\limits_0^{\pi} \int\limits_0^\infty \int\limits_0^s \exp\Big[-\frac{2\gamma }{n\kappa_n}
\frac{c(n,r)}{c(2,r)} 
\\
& \qquad \times \int\limits_0^{2\pi}\int\limits_0^\infty \mathbf{1}
\Big\{H\big(\begin{pmatrix}\cos\theta\\ \sin\theta\end{pmatrix},t\big) \cap \big[ o, s\begin{pmatrix}\cos\varphi\\  \sin\varphi\end{pmatrix}, \begin{pmatrix}u\\0 \end{pmatrix}\big] \neq \emptyset \Big\} t^{r-1} \,dt\, d\theta\Big]\\
& \qquad \times s^{n-1} u^{n-1}
(\sin\varphi)^{n-2} \,du\, ds\, d\varphi.
\end{align*}
For the last equation, we used that 

\begin{align*}
&   \int\limits_\pi^{2\pi} \int\limits_0^\infty \int\limits_0^s \exp\Big[-\frac{2\gamma }{n\kappa_n} \frac{c(n,r)}{c(2,r)}\\
& \qquad  \times \int\limits_0^{2\pi}\int\limits_0^\infty \mathbf{1}
\Big\{H\big(\begin{pmatrix}\cos\theta\\ \sin\theta\end{pmatrix},t\big) \cap \big[ o, s \begin{pmatrix} \cos\varphi\\ \sin\varphi \end{pmatrix}, \begin{pmatrix}u\\ 0\end{pmatrix}\big] \neq \emptyset \Big\} t^{r-1} \,dt\, d\theta\Big]\\
& \qquad \times u^{n-1} s^{n-1} |\sin\varphi|^{n-2} \,du\, ds\, d\varphi\\
& =  \int\limits_0^\pi \int\limits_0^\infty \int\limits_0^s
\exp\Big[-\frac{2\gamma}{n\kappa_n} \frac{c(n,r)}{c(2,r)}\\
& \qquad \times \int\limits_0^{2\pi} \int\limits_0^\infty \mathbf{1} \Big\{H\big(\begin{pmatrix}\cos\theta\\ \sin\theta \end{pmatrix},t\big) \cap \big[o, s \begin{pmatrix}\cos\varphi\\ \sin\varphi \end{pmatrix}, \begin{pmatrix}u\\ 0 \end{pmatrix}\big] \neq \emptyset\Big\} t^{r-1} \,dt\, d\theta\Big]\\
& \qquad \times u^{n-1} s^{n-1} (\sin\varphi)^{n-2} \,du\, ds\, d\varphi,
\end{align*}
which follows from the invariance of the inner integral under reflection of $\varphi$ and $\theta$ with respect to the first axis (that is, invariance with respect to replacing $\varphi$ by $-\varphi$ and $\theta$ by $-\theta$). This completes the proof. 
\end{proof}
%<<<<<<<<<<<<<<<<<<<<<<<<<<<<<<<<<<<<<<<<<<<<<<<<<<<<<<<<<<<<<<<<<<<<<<<<<<<<<

\bigskip

In the expression found in Lemma \ref{3.7} the indicator function depends on $\theta,t,s,u$ and $\varphi$. 
Its support can be determined explicitly. This is used in the proof of the following lemma where it is shown 
how the integration with respect to $t$ in Lemma \ref{3.7} can be carried out.
\begin{lemma}\label{3.8}
For $ u ,s \in (0,\infty)$, $u\le s$ and $\varphi \in (0,\pi)$,
\[ \int\limits_0^{2\pi}\int\limits_0^\infty\mathbf{1}\Big\{H\big(\begin{pmatrix}\cos \theta\\ 
\sin \theta \end{pmatrix},t\big)\cap \big[o,s\begin{pmatrix}\cos \varphi\\ 
\sin \varphi \end{pmatrix},\begin{pmatrix}u\\ 0 \end{pmatrix}\big] \neq \emptyset \Big\}t^{r-1}\,dt\, d\theta =\frac{s^r c(2,r)}{r} F_r\left(\frac{u}{s},\varphi\right).
\]
\end{lemma}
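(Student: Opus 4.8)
The plan is to fix the direction $u_\theta:=(\cos\theta,\sin\theta)^\top$, carry out the inner integration over $t$ first, and only then analyse the resulting integrand as a function of $\theta$. Write $T:=[o,P_1,P_2]$ with $P_1:=s(\cos\varphi,\sin\varphi)^\top$ and $P_2:=(u,0)^\top$. For fixed $\theta$ the line $H(u_\theta,t)$ meets $T$ precisely when $t$ lies between $\min_{x\in T}\langle x,u_\theta\rangle$ and $\max_{x\in T}\langle x,u_\theta\rangle$. Since $o\in T$ we have $\min_{x\in T}\langle x,u_\theta\rangle\le 0$, so for $t\ge 0$ the condition reduces to $0\le t\le h(\theta)$, where
\[
h(\theta):=\max_{x\in T}\langle x,u_\theta\rangle=\max\{0,\langle P_1,u_\theta\rangle,\langle P_2,u_\theta\rangle\}=\max\{0,\,s\cos(\theta-\varphi),\,u\cos\theta\}
\]
is the support function of $T$ (the maximum being attained at a vertex). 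Hence $\int_0^\infty\mathbf 1\{H(u_\theta,t)\cap T\neq\emptyset\}\,t^{r-1}\,dt=h(\theta)^r/r$, and after pulling out the factor $s^r$ and writing $t=u/s\in(0,1]$, the whole expression becomes $\frac{s^r}{r}\int_0^{2\pi}\big(\max\{0,\cos(\theta-\varphi),t\cos\theta\}\big)^r\,d\theta$. It then remains to show that this last integral equals $c(2,r)F_r(t,\varphi)$.

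The main work is to decompose the circle according to which of the three candidates realises the maximum. The term $\cos(\theta-\varphi)$ is positive exactly on $(\varphi-\tfrac\pi2,\varphi+\tfrac\pi2)$ and $t\cos\theta$ exactly on $(-\tfrac\pi2,\tfrac\pi2)$; for $\varphi\in(0,\pi)$ these arcs overlap and their union is the single arc $(-\tfrac\pi2,\varphi+\tfrac\pi2)$, off which the integrand vanishes. On the overlap the two positive terms coincide iff $\cos\theta(\cos\varphi-t)+\sin\theta\sin\varphi=0$, i.e.\ iff $\tan\theta=(t-\cos\varphi)/\sin\varphi$; this identifies the unique crossing point in $(-\tfrac\pi2,\tfrac\pi2)$ as $\theta=\alpha(t,\varphi)$, which by Remark \ref{3.9}(1) lies in $[\varphi-\tfrac\pi2,\varphi/2]$ and hence inside the overlap. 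Evaluating the sign of $\cos(\theta-\varphi)-t\cos\theta$ at the endpoints $\theta=\varphi-\tfrac\pi2$ (where it equals $-t\sin\varphi<0$) and $\theta=\tfrac\pi2$ (where it equals $\sin\varphi>0$) shows that $t\cos\theta$ dominates to the left of $\alpha(t,\varphi)$ and $\cos(\theta-\varphi)$ to the right. I expect this case analysis — keeping track of which term is the maximum and checking that the relevant cosines stay nonnegative so that the $r$-th powers are well defined — to be the only delicate part.

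Combining the pieces, the integrand equals $t\cos\theta$ on $(-\tfrac\pi2,\alpha(t,\varphi))$, equals $\cos(\theta-\varphi)$ on $(\alpha(t,\varphi),\varphi+\tfrac\pi2)$, and vanishes elsewhere, so that
\[
\int_0^{2\pi}\big(\max\{0,\cos(\theta-\varphi),t\cos\theta\}\big)^r\,d\theta
= t^r\int_{-\pi/2}^{\alpha(t,\varphi)}(\cos\theta)^r\,d\theta+\int_{\alpha(t,\varphi)}^{\varphi+\pi/2}(\cos(\theta-\varphi))^r\,d\theta .
\]
The substitution $\theta\mapsto\theta-\varphi$ turns the second integral into $\int_{\alpha(t,\varphi)-\varphi}^{\pi/2}(\cos\theta)^r\,d\theta$, and the right-hand side is then exactly $c(2,r)F_r(t,\varphi)$ by the definition of $F_r$. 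Multiplying by $s^r/r$ yields the asserted value $\frac{s^r c(2,r)}{r}F_r(u/s,\varphi)$, completing the proof.
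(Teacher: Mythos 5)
Your proof is correct and follows essentially the same route as the paper: both arguments reduce the indicator's support to $0\le t\le \max\{0,\,u\cos\theta,\,s\cos(\theta-\varphi)\}$ (you phrase this via the support function of the triangle, the paper via the union of the two segments through the origin), identify the crossing angle $\alpha(u/s,\varphi)$ where the two vertex functionals coincide, and split the $\theta$-range $(-\pi/2,\varphi+\pi/2)$ at that point before computing the resulting one-dimensional integrals. Your explicit sign check at the endpoints of the overlap to decide which term dominates is a slightly more detailed version of a step the paper leaves implicit; otherwise the two proofs coincide.
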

%------------------------------------------------------------------------
\begin{proof}

The value of the integral on the left-hand side of the asserted equation does not change if we choose $[-\pi/2,3\pi/2]$ instead of $[0,2\pi]$ as the integration domain. Thus we have to determine the support of the indicator function under the integral on $[-\pi/2,3\pi/2]\times[0,\infty)$. Let $\varphi\in (0,\pi)$ be fixed. Then the indicator function is $1$ if and only if
\begin{equation}\label{seccases}
H\big(\begin{pmatrix}\cos\theta \\ \sin\theta\end{pmatrix},t\big) \cap
\big[o, s \begin{pmatrix}\cos\varphi\\ \sin\varphi\end{pmatrix}\big] \neq \emptyset \quad \text{or}\quad
H\big(\begin{pmatrix}\cos\theta \\ \sin\theta\end{pmatrix},t\big) \cap
\big[o,\begin{pmatrix}u\\ 0\end{pmatrix}\big] \neq \emptyset,
\end{equation}
which is satisfied if and only if
\[
t \in [0,u(\cos\theta)_+] \cup [0, s(\cos(\theta-\varphi))_+].
\]
If $\theta\in [-\pi/2,\varphi-\pi/2]$, this is equivalent to $t\in [0,u\cos\theta]$.
If $\theta\in [\varphi-\pi/2,\pi/2]$, this is equivalent to $t\in [0,\max\{u\cos\theta,s\cos(\theta-\varphi)\}]$.
If $\theta\in [\pi/2, \varphi+\pi/2]$, this is equivalent to $t\in [0,s\cos(\theta-\varphi)]$. For all other choices of $\theta\in [-\pi/2,3\pi/2]$, this is equivalent to $t=0$, and hence  can be neglected for the integration. Since
\begin{align*}
u\cos\theta = s \cos( \theta-\varphi) \; \Leftrightarrow \; \theta = \arctan\left(\frac{\frac{u}{s}-\cos\varphi}{ \sin\varphi}\right)=\alpha\left(\frac{u}{s},\varphi\right),
\end{align*}
we conclude that \eqref{seccases} is satisfied if and only if
$$
(\theta,t)\in \left( [-\frac{\pi}{2},\alpha\left(\frac{u}{s},\varphi\right))\times [0,u\cos\theta]\right)\cup \left(\big[\alpha\left(\frac{u}{s},\varphi\right), \varphi+\frac{\pi}{2}\big]\times  [0, s\cos(\theta- \varphi)]\right), 
$$
as illustrated in Figure \ref{figure4_5}.
Now the integral can be easily computed.
%\begin{figure}[h!]
\begin{figure}
\centering
\includegraphics[width=400pt]{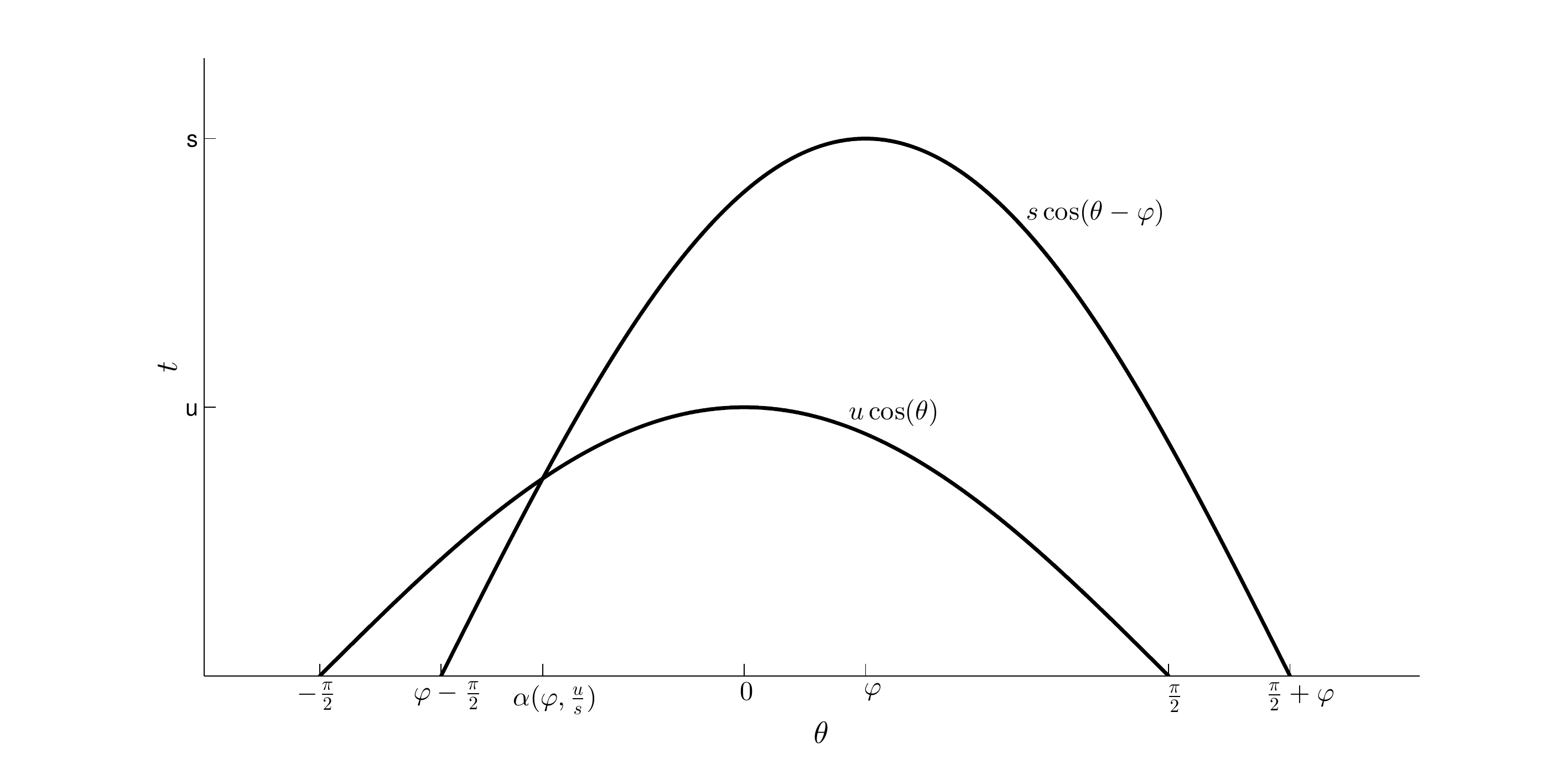}
\caption{Support of the indicator function for $\varphi=\frac{\pi}{7}$.}  \label{figure4_5}
\end{figure}

\end{proof}

\bigskip

%
%--------------------
%\end{enumerate}
%\end{remark}

%<<<<<<<<<<<<<<<<<<<<<<<<<<<<<<<<<<<<<<<<<<<<<<<<<<<<<<<<<<<<<<<<<<<<<<<<<<<<<<<<<<<<<<<<<<<<
\section{Variance inequalities}

In the next theorem, inequalities for $\Var[V_n(Z_0)]$ are provided. In these inequalities two auxiliary quantities, 
   $D(n,r)$ and $E(n,r)$, to be defined below, are involved. 
   %For a better understanding of  $E(n,r)$, we introduce the auxiliary function $M(v,r)$, $v \in [-{\pi}/{2},{\pi}/{2}]$, for %which an upper and a lower estimate is provided in Lemma \ref{3.11}. 
   In Lemma \ref{3.12} we establish crucial bounds for $E(n,r)$. 
%which are based on Lemma \ref{3.11}.

\bigskip

%<<<<<<<<<<<<<<<<<<<<<<<<<<<<<<<<<<<<<<<<<<<<<<<<<<<<<<<<<<<<<<<<<<<<<<<<<<<<<<<<<
For $n\in\mathbb{N}$ with $n\ge 2$ and $r>0$, we define
\[
D(n,r) := \frac{n\kappa_n^2}{r}\, \Gamma\left(\frac{2n}{r}+1\right)\,\left(\frac{n\kappa_n r}{4\gamma c(n,r)}\right)^{\frac{2n}{r}}
\]
and
\begin{align*}
 E(n,r) &:=  \frac{n}{c(2,n-2)} \int\limits_0^\pi  (\sin\varphi)^{n-2} \int\limits_0^1 t^{n-1}\left( 1+t^r-F_r(t,\varphi) \right) \,dt\,d\varphi.
\end{align*}
Introducing for $v \in [-{\pi}/{2},{\pi}/{2}]$ and $r \in (0,\infty)$ the function  $M(v,r)$ by 
\[
M(v,r) := \frac{1}{c(2,r)}\int\limits_v^{\frac{\pi}{2}} (\cos\theta)^r \,d\theta,
\]
we obtain
\begin{align}\label{late1}
 E(n,r) &= \frac{n}{c(2,n-2)} \int\limits_0^\pi  (\sin\varphi)^{n-2} \int\limits_0^1 t^{n-1} \big[t^r M(\alpha(t,\varphi),r)\\
&\qquad\qquad\qquad\qquad
 + M(\varphi-\alpha(t,\varphi),r)\big] \,dt\,d\varphi,\nonumber
\end{align}
where $-\pi/2<\alpha(t,\varphi)<\pi/2$ and $0< \varphi/2\le \varphi-\alpha(t,\varphi)\le \pi/2$ for $\varphi\in(0,\pi)$.
%<<<<<<<<<<<<<<<<<<<<<<<<<<<<<<<<<<<<<<<<<<<<<<<<<<<<<<<<<<<<<<<<<<<<<<<<<<<<<<<<<<
\begin{theorem}\label{3.10}
With these definitions, we have
\begin{align*}
E(n,r)\,D(n,r) \;\leq\; \Var[V_n(Z_0)]\; \leq \;  E(n,r)\, D(n,r)\,4^{\frac{2n}{r}+1}.
\end{align*}
\end{theorem}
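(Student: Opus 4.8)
The plan is to start from the explicit variance formula of Theorem \ref{2} and to estimate, pointwise on the integration domain, the bracketed factor in which the two competing terms appear. Abbreviating $p := 2n/r$, the integrand difference equals $g(F_r(t,\varphi)) - g(t^r+1)$ with $g(x) := x^{-p}$. Since $g$ is smooth and strictly decreasing, the mean value theorem gives
\[
\frac{1}{F_r(t,\varphi)^{p}} - \frac{1}{(t^r+1)^{p}} = p\,\xi^{-p-1}\bigl(1+t^r-F_r(t,\varphi)\bigr)
\]
for some $\xi$ lying between $F_r(t,\varphi)$ and $t^r+1$. The point of this step is that it isolates exactly the factor $1+t^r-F_r(t,\varphi)$ that defines $E(n,r)$.

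Next I would invoke \eqref{Fbound}, which guarantees $\tfrac12 \le F_r(t,\varphi) \le t^r+1 \le 2$ on all of $[0,1]\times(0,\pi)$; hence both endpoints, and therefore $\xi$, lie in $[\tfrac12,2]$, so that $2^{-p-1} \le \xi^{-p-1} \le 2^{p+1}$. This yields the pointwise two-sided estimate
\[
p\,2^{-p-1}\bigl(1+t^r-F_r(t,\varphi)\bigr) \;\le\; \frac{1}{F_r(t,\varphi)^{p}} - \frac{1}{(t^r+1)^{p}} \;\le\; p\,2^{p+1}\bigl(1+t^r-F_r(t,\varphi)\bigr),
\]
which in particular shows the integrand is nonnegative, consistent with $\Var[V_n(Z_0)]\ge 0$. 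I would then integrate these inequalities against the weight $t^{n-1}(\sin\varphi)^{n-2}$ over $[0,1]\times(0,\pi)$; by the definition of $E(n,r)$ the remaining integral $\int_0^\pi\int_0^1 (1+t^r-F_r(t,\varphi))\,t^{n-1}(\sin\varphi)^{n-2}\,dt\,d\varphi$ is precisely $\tfrac{c(2,n-2)}{n}\,E(n,r)$.

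Substituting back into the formula of Theorem \ref{2} reduces the statement to pure constant bookkeeping. Using $p\,\Gamma(2n/r)=\Gamma(2n/r+1)$ together with the absorption $(n\kappa_n r/(2\gamma c(n,r)))^{p}\,2^{-p} = (n\kappa_n r/(4\gamma c(n,r)))^{p}$, the prefactor of the lower estimate collapses to $D(n,r)$ multiplied by the dimensional constant $4\pi b_{n,2}\,c(2,n-2)/(n^2\kappa_n^2)$, while the upper estimate carries the additional factor $2^{p+1}/2^{-p-1}=4^{p+1}=4^{2n/r+1}$. The final ingredient is to check that the dimensional constant is exactly $1$, i.e.\ $n^2\kappa_n^2 = 4\pi b_{n,2}\,c(2,n-2)$: this follows from $n\kappa_n=\omega_n$, the definition $b_{n,2}=\omega_{n-1}\omega_n/(4\pi)$, the value $c(2,n-2)=\sqrt{\pi}\,\Gamma(\tfrac{n-1}{2})/\Gamma(\tfrac{n}{2})$ read off from \eqref{eq_c_nr}, and $\omega_k=2\pi^{k/2}/\Gamma(k/2)$, after the $\Gamma(\tfrac{n-1}{2})$ factors cancel and one is left with $4\pi^n/\Gamma(\tfrac{n}{2})^2 = \omega_n^2$ on both sides. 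Putting the extreme constants together gives the lower bound $D(n,r)\,E(n,r)$ and the upper bound $D(n,r)\,E(n,r)\,4^{2n/r+1}$, as claimed.

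There is no deep obstacle here; the crux is the localization in the second paragraph. The width of the enclosing interval $[\tfrac12,2]$ in \eqref{Fbound} makes $\xi^{-p-1}$ range over $[2^{-p-1},2^{p+1}]$, and it is exactly this spread that forces the multiplicative gap $4^{2n/r+1}$ between the two bounds; sharpening the theorem would require a finer control of $F_r(t,\varphi)$ than \eqref{Fbound} provides. Everything else is the gamma-function identity $n^2\kappa_n^2=4\pi b_{n,2}c(2,n-2)$ and the recursion $\Gamma(2n/r+1)=(2n/r)\Gamma(2n/r)$, both of which are routine.
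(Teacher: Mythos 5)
Your proposal is correct and follows essentially the same route as the paper: the mean value theorem applied to $x\mapsto x^{-2n/r}$ combined with \eqref{Fbound} to localize the intermediate point in $[\tfrac12,2]$, yielding the pointwise factors $\tfrac{n}{r}2^{-2n/r}$ and $4\tfrac{n}{r}2^{2n/r}$ whose ratio is $4^{2n/r+1}$, followed by the constant identity $n^2\kappa_n^2=4\pi b_{n,2}c(2,n-2)$. The paper leaves the final bookkeeping implicit ("the assertion follows in a similar way"), whereas you carry it out explicitly; the substance is identical.
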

%-------------------------------------------------------------------------------
\begin{proof}
The mean value theorem and \eqref{Fbound} yield that
\begin{align*}
& \int\limits_0^\pi \int\limits_0^1 \left(\frac{1}{F_r(t,\varphi)^{\frac{2n}{r}}} - \frac{1}{(t^r+1)^{\frac{2n}{r}}}\right) t^{n-1}(\sin \varphi)^{n-2}\,dt\,d\varphi\\
& \qquad \geq \frac{n}{r}\frac{1}{2^{\frac{2n}{r}}} \int\limits_0^\pi \int\limits_0^1 \Big(t^r+1-F_r(t,\varphi) \Big) t^{n-1}(\sin \varphi)^{n-2}\,dt\,d\varphi,
\end{align*}
hence we deduce from Theorem \ref{2}, that 
\[
\Var[V_n(Z_0)]\geq E(n,r) \frac{c(2,n-2)}{r\, 2^{\frac{2n}{r}}} \frac{8\pi b_{n,2}}{r} \Gamma\left(\frac{2n}{r}\right) \left(\frac{n\kappa_n r}{2\gamma c(n,r)}\right)^{\frac{2n}{r}}= E(n,r)\, D(n,r).
\]
Again by the mean value theorem and \eqref{Fbound}, we have
\begin{align*}
& \int\limits_0^\pi \int\limits_0^1 \left(\frac{1}{F_r(t,\varphi)^{\frac{2n}{r}}} - \frac{1}{(t^r+1)^{\frac{2n}{r}}}\right) t^{n-1}(\sin \varphi)^{n-2}\,dt\,d\varphi\\
& \qquad\leq 4\frac{n}{r}\, 2^{\frac{2n}{r}} \int\limits_0^\pi \int\limits_0^1 \Big(t^r+1-F_r(t,\varphi) \Big) t^{n-1}(\sin \varphi)^{n-2}\,dt\,d\varphi.
\end{align*}
Now the assertion follows in a similar way as for the lower bound.
\end{proof}
%<<<<<<<<<<<<<<<<<<<<<<<<<<<<<<<<<<<<<<<<<<<<<<<<<<<<<<<<<<<<<<<<<<<<<<<<<<<<<<<<<<<<<<<<<<<<<<<<<<<<<<<<<

\bigskip

%<<<<<<<<<<<<<<<<<<<<<<<<<<<<<<<<<<<<<<<<<<<<<<<<<<<<<<<<<<<<<<<<<<<<<<<<<<<<<<<<<<<<<<<<<<<<<<<<<<<<<<<<<<<
%<<<<<<<<<<<<<<<<<<<<<<<<<<<<<<<<<<<<<<<<<<<<<<<<<<<<<<<<<<<<<<<<<<<<<<<<<<<<<<<<<<<<<<<<<<<<<<<<<<<<<<<<<
The next lemma provides upper and lower bounds for the auxiliary quantity $E(n,r)$. In the sequel, these bounds 
will then be combined with Theorem \ref{3.10} to obtain further consequences stated in Corollaries \ref{5.1} and \ref{5.6} and in Theorem \ref{5.10}.

\bigskip

\begin{lemma}\label{3.12}
\begin{enumerate}
\item[{\rm (a)}] For all $r \in (0,\infty)$, we have
$$
c_r \leq E(n,r) \leq {3}/{2}
$$
with a constant $c_r>0$ which depends on $r$ but not on $n$.
%
%-----------------
\item[{\rm (b)}] For all $r \in (0,\infty)$ and $n\geq 3$, we have
\begin{align*}
&c\,\frac{\left(1+\frac{r}{n}\right)^{-\frac{1}{2}}}{\sqrt{r+1}}\,2^{\frac{n}{2}}\,\left(1+\frac{r}{2n}\right)^{-\frac{n}{2}}
\left(1+\frac{n}{n+r}\right)^{-\frac{n+r}{2}}\\
&\qquad\qquad \leq 
E(n,r)\leq C \,\frac{(1+\frac{r}{n})}{\sqrt{r+1}} \,2^{\frac{n}{2}}\, \left(1+\frac{r}{2n}\right)^{-\frac{n}{2}}\left(1+\frac{n}{n+r}\right)^{-\frac{n+r}{2}} 
\end{align*}
with constants $c, C>0$ which  are independent of $r$ and $n$. 
\item[{\rm (c)}]
For all $r\in(0,\infty)$, we have 
\[
c\, \frac{1}{(r+1)^{2}} \leq 
E(2,r) \leq  \frac{1}{\sqrt{r+1}}
\]
with a constant $c>0$ which  is independent of $r$.

\end{enumerate}
\end{lemma}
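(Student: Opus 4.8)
The plan is to derive all three parts from a single two-sided comparison of $E(n,r)$ with the quantity
\[
G(n,r):=\frac{1}{c(2,n-2)}\int\limits_0^\pi(\sin\varphi)^{n-2}\,M(\varphi/2,r)\,d\varphi .
\]
By \eqref{late1}, $E(n,r)=\frac{n}{c(2,n-2)}\int_0^\pi(\sin\varphi)^{n-2}\int_0^1 t^{n-1}h(t,\varphi)\,dt\,d\varphi$, where $h(t,\varphi):=t^rM(\alpha(t,\varphi),r)+M(\varphi-\alpha(t,\varphi),r)$. The crucial point is that $t\mapsto h(t,\varphi)$ is nondecreasing on $[0,1]$. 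Indeed, since $M'(v,r)=-c(2,r)^{-1}(\cos v)^r$, differentiation gives
\[
\partial_t h(t,\varphi)=rt^{r-1}M(\alpha,r)+\frac{\partial_t\alpha}{c(2,r)}\Big[(\cos(\varphi-\alpha))^r-(t\cos\alpha)^r\Big],
\]
and the bracket vanishes because the defining relation $\tan\alpha(t,\varphi)=(t-\cos\varphi)/\sin\varphi$ is equivalent to $t\cos\alpha(t,\varphi)=\cos(\varphi-\alpha(t,\varphi))$; as $\partial_t\alpha>0$ we get $\partial_t h\ge0$. Discarding the nonnegative second summand of $h$ and using $\alpha(t,\varphi)\le\varphi/2$ (Remark \ref{3.9}(1)) together with the monotonicity of $M$, we obtain $t^rM(\varphi/2,r)\le h(t,\varphi)\le h(1,\varphi)=2M(\varphi/2,r)$. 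Carrying out the elementary $t$-integrals then yields the master inequality
\[
\frac{n}{n+r}\,G(n,r)\;\le\;E(n,r)\;\le\;2\,G(n,r).
\]

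Part (a) follows at once. Since $0\le M(\varphi/2,r)\le M(0,r)=\tfrac12$ (by evenness of $(\cos\theta)^r$ and Remark \ref{3.9}(2)) and $\int_0^\pi(\sin\varphi)^{n-2}\,d\varphi=c(2,n-2)$ (a special case of \eqref{eqGamma}), we have $G(n,r)\le\tfrac12$, hence $E(n,r)\le2G(n,r)\le1\le\tfrac32$. For the lower bound I restrict the $\varphi$-integration to $(0,\pi/2)$, where $M(\varphi/2,r)\ge M(\pi/4,r)$ and which carries half of $\int_0^\pi(\sin\varphi)^{n-2}\,d\varphi$; combined with $\tfrac{n}{n+r}\ge\tfrac{2}{2+r}$ this gives $E(n,r)\ge\tfrac{n}{n+r}G(n,r)\ge\tfrac{M(\pi/4,r)}{2+r}=:c_r>0$, which is independent of $n$.

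For part (b) it remains to estimate $G(n,r)$ two-sidedly for $n\ge3$. The plan is to bound $M(\varphi/2,r)=c(2,r)^{-1}\int_{\varphi/2}^{\pi/2}(\cos\theta)^r\,d\theta$ from above and below by a constant multiple of $\frac{(\cos(\varphi/2))^{r+1}}{c(2,r)\,r\,\sin(\varphi/2)}$, the upper bound coming from the convexity estimate $\cos(\varphi/2+s)\le\cos(\varphi/2)\,e^{-s\tan(\varphi/2)}$ after extending the $s$-range to infinity, the lower bound from integrating over a subinterval of length of order $1/(r\tan(\varphi/2))$. Substituting $\varphi=2\psi$, using $\sin\varphi=2\sin\psi\cos\psi$ and \eqref{eqGamma}, reduces the remaining $\varphi$-integral to closed form, e.g.
\[
\int\limits_0^\pi(\sin\varphi)^{n-2}(\cos(\varphi/2))^{r+1}\,d\varphi=2^{n-2}\,\frac{\Gamma(\frac{n-1}{2})\,\Gamma(\frac{n+r}{2})}{\Gamma(\frac{2n+r-1}{2})},
\]
the factor $\sin(\varphi/2)^{-1}$ only lowering the exponent of $\sin\psi$ by one. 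Dividing by $c(2,n-2)$ and $c(2,r)$, evaluated through \eqref{eq_c_nr}, and applying Stirling's formula produces exactly the exponential factor $2^{n/2}(1+\tfrac{r}{2n})^{-n/2}(1+\tfrac{n}{n+r})^{-(n+r)/2}$ together with algebraic prefactors; after combining with the factors $\tfrac{n}{n+r}$ and $2$ from the master inequality, the two resulting estimates lie between the claimed sides of (b), the discrepancy $(1+r/n)^{\pm}$ being precisely the slack that absorbs these bounded factors. \textbf{The main obstacle} is this two-sided evaluation of $G(n,r)$: since $M(\cdot,r)$ varies over a window of width of order $r^{-1/2}$, the endpoint bounds on $M(\varphi/2,r)$ and the Laplace evaluation of the $\varphi$-integral at its maximizer $\varphi^\ast$ (where $\tan(\varphi^\ast/2)=(1+r/n)^{-1/2}$) have to be controlled uniformly in both $n$ and $r$, and it is here that the non-sharp prefactors arise.

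Part (c) uses the master inequality at $n=2$, namely $\tfrac{2}{2+r}G(2,r)\le E(2,r)\le2G(2,r)$. Here $(\sin\varphi)^{n-2}\equiv1$ and $c(2,0)=\pi$, so swapping the order of integration gives $G(2,r)=\tfrac{1}{\pi}\int_0^\pi M(\varphi/2,r)\,d\varphi=\tfrac{2}{\pi\,c(2,r)}\int_0^{\pi/2}\theta(\cos\theta)^r\,d\theta$. Using the Gaussian bound $(\cos\theta)^r\le e^{-r\theta^2/2}$ we get $\int_0^{\pi/2}\theta(\cos\theta)^r\,d\theta\le 1/r$, which together with $c(2,r)^{-1}\asymp(r+1)^{1/2}$ (Stirling in \eqref{eq_c_nr}) shows that $E(2,r)\le2G(2,r)$ is of order $(r+1)^{-1/2}$, yielding the upper bound. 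For the lower bound one estimates the same integral from below over a subinterval of length of order $r^{-1/2}$, so that $G(2,r)$ is again of order $(r+1)^{-1/2}$ and $E(2,r)\ge\tfrac{2}{2+r}G(2,r)$ is of order $(r+1)^{-3/2}$, comfortably above the claimed $(r+1)^{-2}$.
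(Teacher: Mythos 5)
Your master inequality $\tfrac{n}{n+r}G(n,r)\le E(n,r)\le 2G(n,r)$ is correct and is essentially the paper's own argument repackaged: the monotonicity of $t\mapsto h(t,\varphi)$ via the identity $t\cos\alpha(t,\varphi)=\cos(\varphi-\alpha(t,\varphi))$ is exactly the paper's computation of $h_\varphi'$, the bound $h(t,\varphi)\le h(1,\varphi)=2M(\varphi/2,r)$ is the paper's \eqref{eqas1}, and your treatment of part (a) and of the Fubini step in part (c) matches the paper's. The packaging as a single two-sided comparison with $G(n,r)$ is a tidy economy, and parts (a) and the lower bound in (c) go through as you describe.

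The genuine gap is in part (b). Your proposed pointwise lower bound $M(\varphi/2,r)\ge c\,\frac{(\cos(\varphi/2))^{r+1}}{c(2,r)\,r\,\sin(\varphi/2)}$ is false near $\varphi=0$: the right-hand side diverges as $\varphi\downarrow 0$ while $M(\varphi/2,r)\le M(0,r)=\tfrac12$. Your subinterval of length of order $1/(r\tan(\varphi/2))$ only yields a comparable lower bound when $r\tan^2(\varphi/2)$ stays bounded below, i.e.\ for $\varphi\gtrsim r^{-1/2}$, so you would have to split into regimes and run a genuine Laplace argument near the maximizer, uniformly in $n$ and $r$; you flag this as ``the main obstacle'' but do not carry it out, so part (b) is not proved. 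The paper sidesteps the difficulty entirely: for the lower bound it uses the weaker but uniformly valid estimate $M(\varphi/2,r)\ge\frac{1}{c(2,r)}\int_{\varphi/2}^{\pi/2}(\cos\theta)^r\sin\theta\,d\theta=\frac{(\cos(\varphi/2))^{r+1}}{c(2,r)(r+1)}$, which simply drops the $1/\sin(\varphi/2)$ factor; the resulting loss is precisely the $(1+\tfrac rn)^{3/2}$ discrepancy between the two sides of (b), so nothing sharper is required. With that substitution both $\varphi$-integrals reduce via $\varphi=2\psi$ to the Beta functions you wrote down, and Stirling finishes the job. A second, smaller defect: the upper bound in (c) is claimed with constant exactly $1$, namely $E(2,r)\le(r+1)^{-1/2}$ for all $r>0$, and your Gaussian bound only delivers the correct order of magnitude (and not even that for small $r$ without invoking (a)); the paper instead inserts $2\vartheta/\pi\le\sin\vartheta$ into the integral to get $E(2,r)\le\frac{2}{c(2,r)(r+1)}$ and then uses $c(2,r)\ge 2/\sqrt{r+1}$, which gives the constant $1$ uniformly in $r$.
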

%-------------------------------------------------------------------------------------------------------
\begin{proof}
%
%\begin{itemize}
%
(a) Let  $r \in (0,\infty)$  be fixed. By \eqref{Fbound} and Remark \ref{3.9}, (2), we get
\[
E(n,r)
\leq  \frac{n}{c(2,n-2)} \int\limits_0^\pi  (\sin\varphi)^{n-2} \int\limits_0^1 t^{n-1} \,\frac{3}{2} \,dt\,d\varphi = \frac{3}{2}.
\]
Next we bound $E(n,r)$ from below. For this we start from \eqref{late1}, use the fact that $M$ is nonnegative and decreasing with respect to its first argument and that $\alpha(t,\varphi)\leq {\varphi}/{2}\leq {\pi}/{4}$, 
for $\varphi\in (0,\pi/2)$,  by Remark \ref{3.9}, (1), and apply then Remark \ref{3.9}, (2). This leads to
\begin{align*}
E(n,r)
& \geq \frac{n}{c(2,n-2)} \int\limits_0^{\frac{\pi}{2}}  (\sin\varphi)^{n-2} \int\limits_0^1 t^{n+r-1} M(\alpha(t,\varphi),r)\, dt\, d\varphi\\
& \geq \frac{n}{c(2,n-2)} \int\limits_0^{\frac{\pi}{2}} (\sin\varphi)^{n-2}\,d\varphi  
\cdot\frac{M(\frac{\pi}{4},r)}{n+r}\\
& \ge (2(1+r))^{-1} M\left(\frac{\pi}{4},r\right)=c_r>0.
\end{align*}
%A lower bound can also be deduced from the more general result in (b).
%
%----------------------
%

\bigskip

\noindent
(b)
Note that
\[
E(n,r) = \frac{n}{c(2,n-2)c(2,r)} \int\limits_{0}^\pi (\sin\varphi)^{n-2} \int\limits_0^1 t^{n-1} 
h_\varphi(t) \,dt\, d\varphi,
\]
where  $h_\varphi: [0,1]\rightarrow \mathbb{R}$, for $\varphi\in(0,\pi)$, is defined by
\[
h_\varphi(t):= c(2,r)(1+t^r-F_r(t,\varphi)),\qquad t\in [0,1].
\]
Observe that $h_\varphi$ depends on $r$, although this dependence is not made explicit by our notation (since $r>0$ 
is arbitrary but fixed in this part of the proof). 
Recall that $-\pi/2<\alpha(t,\varphi)<\pi/2$, $0\leq \varphi-\alpha(t,\varphi)\leq {\pi}/{2}$ and note that 
\[
h_\varphi(t)= t^r \int\limits_0^{\frac{\pi}{2}-\alpha(t,\varphi)} (\sin\theta)^r \,d\theta + \int\limits_0^{\frac{\pi}{2}-(\varphi-\alpha(t,\varphi))}(\sin\theta)^r \,d\theta.
\]
 The function $h_\varphi$ is strictly increasing on $[0,1]$, since
\begin{align*}
h^\prime_\varphi(t)&= r t^{r-1}\int\limits_0^{\frac{\pi}{2}-\alpha(t,\varphi)}(\sin\theta)^r\, d\theta + \alpha_t(t,\varphi)\left( (\cos(\alpha(t,\varphi)-\varphi))^r - t^r (\cos(\alpha(t,\varphi))^r\right)\\
& = r t^{r-1} \int\limits_0^{\frac{\pi}{2}-\alpha(t,\varphi)} (\sin\theta)^r \,d\theta >0,
\end{align*}
where $0< {\pi}/{2}-\alpha(t,\varphi)< \pi$. Here, $\alpha_t$ denotes the partial derivative of $\alpha(t,\varphi)$ with respect to the first argument $t$. For the last equality we used that $\cos(\varphi)+\tan(\alpha(t,\varphi))\sin\varphi=t$, by the definition of $\alpha(t,\varphi)$, and therefore 
\begin{align*}
\cos(\alpha(t,\varphi)-\varphi) = \cos(\alpha(t,\varphi))\cos(\varphi) + \sin(\alpha(t,\varphi))\sin\varphi
 = t \cos(\alpha(t,\varphi)). 
\end{align*}
Since $\alpha(1,\varphi)={\varphi}/{2}$, we obtain for $t\in [0,1]$ that
\[
0= h_\varphi(0) \leq h_\varphi(t)\leq h_\varphi(1)=2\int\limits_0^{\frac{\pi-\varphi}{2}} (\sin\theta)^r
\, d\theta,
 \]
 which implies
 \begin{equation}\label{eqas1}
E(n,r)\leq \frac{2}{c(2,n-2)c(2,r)}\int\limits_0^\pi (\sin\varphi)^{n-2} \int\limits_0^{\frac{\pi-\varphi}{2}}(\sin\theta)^r \,d\theta\, d\varphi.
 \end{equation}
Reproducing the argument at the bottom of p. 925 in \cite{AS}, for $\varphi\in (0,\pi)$ we get that
\begin{align}\label{eqas2}
\int\limits_0^{\frac{\pi-\varphi}{2}}(\sin\theta)^r \,d\theta & \leq
\int\limits_0^{\frac{\pi-\varphi}{2}} \frac{\cos\theta}{\cos(\frac{\pi-\varphi}{2})}(\sin\theta)^r \,d\theta\nonumber\\
& = \frac{(\sin(\frac{\pi-\varphi}{2}))^{r+1}}{(r+1) \sin\frac{\varphi}{2}}\nonumber\\
&= \frac{(\cos\frac{\varphi}{2})^{r+1}}{(r+1)\sin\frac{\varphi}{2}}.
 \end{align}
From \eqref{eqas1} and \eqref{eqas2} we deduce for $n\geq 3$ that
\begin{align*}
 E(n,r) &\leq \frac{2}{(r+1)c(2,n-2)c(2,r)} \int\limits_0^\pi (\sin\varphi)^{n-2} \frac{(\cos\frac{\varphi}{2})^{r+1}}{\sin\frac{\varphi}{2}}\,d\varphi\\
 & \leq \frac{2^{n}}{(r+1)c(2,n-2)c(2,r)} \int\limits_0^{\frac{\pi}{2}} (\sin\psi)^{n-3}(\cos\psi)^{n+r-1}\,d\psi\\
 & = \frac{2^{n-1} \Gamma(\frac{n-2}{2})\Gamma(\frac{n+r}{2})}{(r+1)c(2,n-2)c(2,r) \Gamma(n+\frac{r}{2}-1)}\\
 &\leq \frac{2+\frac{r+2}{n-2}}{(r+1) \pi}\, 2^n \,\frac{\Gamma(\frac{n}{2})^2 \Gamma(\frac{n+r}{2})\Gamma(\frac{r}{2}+1)}{\Gamma(\frac{n-1}{2})\Gamma(\frac{2n+r}{2})\Gamma(\frac{r+1}{2})}.
 \end{align*}
Then Stirling's formula implies the inequality
 \[
E(n,r)\leq C\, \frac{1+\frac{r}{n}}{\sqrt{r+1}} \,2^{\frac{n}{2}}\, \left(1+\frac{n}{n+r}\right)^{-\frac{n+r}{2}} \left(1+\frac{r}{2n}\right)^{-\frac{n}{2}}. 
 \]

To derive a lower bound for $E(n,r)$, for all $n\ge 2$, we start from \eqref{late1} to get
$$
E(n,r)\ge \frac{n}{c(2,n-2)}\int\limits_0^\pi (\sin\varphi)^{n-2}\int\limits_0^1t^{n+r-1} M(\alpha(t,\varphi),r)\, dt\, d\varphi,
$$
since $M$ is nonnegative. 
For $\varphi\in (0,\pi)$, we have $\alpha(t,\varphi)\le{\varphi}/{2}$ and ${\varphi}/{2}\in (0,{\pi}/{2})$, hence $M({\varphi}/{2},r)\le M(\alpha(t,\varphi),r)$. Thus we get 
\begin{align*}
M(\alpha(t,\varphi),r)&\ge M\left(\frac{\varphi}{2},r\right)=\frac{1}{c(2,r)}\int\limits_{\frac{\varphi}{2}}^{\frac{\pi}{2}}(\cos \theta)^r\, d\theta\\
&\ge \frac{1}{c(2,r)}\int\limits_{\frac{\varphi}{2}}^{\frac{\pi}{2}}(\cos \theta)^r\sin\theta \, d\theta\\
&=\frac{1}{c(2,r)}\frac{1}{r+1}\left(\cos\frac{\varphi}{2}\right)^{r+1}.
\end{align*}
This implies that
\begin{align*}
E(n,r)&\ge \frac{n}{n+r}\frac{1}{r+1}\frac{1}{c(2,n-2)c(2,r)}\int\limits_0^\pi (\sin\varphi)^{n-2}
\left(\cos\frac{\varphi}{2}\right)^{r+1}\,d\varphi \\
&\ge c\, \frac{n}{n+r}\sqrt{\frac{n}{r+1}}\,2^{n-1}\,\int\limits_0^{\frac{\pi}{2}}(\sin\psi)^{n-2}(\cos\psi)^{n+r-1}\, d\psi \\
&= c\, \left(1+\frac{r}{n}\right)^{-1}\sqrt{\frac{n}{r+1}}\,2^{n-2}\,\frac{\Gamma\left(\frac{n-1}{2}\right)
\Gamma\left(\frac{n+r}{2}\right)}{\Gamma\left(\frac{2n+r-1}{2}\right)}.
\end{align*}
Stirling's formula then yields 
\begin{align*}
E(n,r)&\ge c\, \frac{\left(1+\frac{r}{n}\right)^{-1}}{\sqrt{r+1}}2^{\frac{n}{2}}\left(1+\frac{r+1}{2(n-1)}\right)^{-\frac{n-1}{2}}
\left(1+\frac{n-1}{n+r}\right)^{-\frac{n+r}{2}} \\
&\ge c\, \frac{\left(1+\frac{r}{n}\right)^{-\frac{1}{2}}}{\sqrt{r+1}}\,2^{\frac{n}{2}}\,\left(1+\frac{r}{2n}\right)^{-\frac{n}{2}}
\left(1+\frac{n}{n+r}\right)^{-\frac{n+r}{2}}, 
\end{align*}
which gives the lower bound.

\bigskip

\noindent
(c) Since the proof of the lower bound in (b) works also for $n=2$ we immediately get the lower bound. 

To derive the upper bound 
for $n=2$, we first use \eqref{eqas1} to get
\begin{align*}
E(2,r)&\le \frac{2}{\pi\, c(2,r)}\int\limits_0^\pi\int\limits_0^{\frac{\pi-\varphi}{2}}(\sin \theta)^r\, d\theta\, d\varphi\\
&=\frac{4}{\pi\, c(2,r)}\int\limits_0^{\frac{\pi}{2}}\left(\frac{\pi}{2}-\theta\right)(\sin\theta)^r\, d\theta\\
&=\frac{2}{ c(2,r)}\int\limits_0^{\frac{\pi}{2}} \frac{2}{\pi}\,\vartheta(\cos\vartheta)^r\, d\vartheta.
\end{align*}
Since $2\vartheta/\pi \le\sin\vartheta$, for $\vartheta\in[0,\pi/2]$, we  further deduce that
$$
E(2,r)\le \frac{2}{ c(2,r)}\int\limits_0^{\frac{\pi}{2}}\sin\vartheta (\cos\vartheta)^r\, d\vartheta 
=\frac{2}{ c(2,r)}\frac{1}{r+1}.
$$
Using Stirling's formula, we find $c(2,r)\ge 2/\sqrt{r+1}$ which yields that $E(2,r)\le 1/\sqrt{r+1}$.

\end{proof}
%<<<<<<<<<<<<<<<<<<<<<<<<<<<<<<<<<<<<<<

\bigskip

In the following corollary, the bounds in \eqref{eq1} are obtained from Proposition \ref{3.3} 
by applying Stirling's formula, compare \eqref{calc1}, \eqref{calc2} and \eqref{calc3}. For the bounds in \eqref{eq2} we first  use Theorem \ref{3.10}. Then, Lemma \ref{3.12} (a) implies that the factor $E(n,r)$ is bounded from above and below by constants depending only on $r$, whereas the factor $D(n,r)$ is equal to the upper bound from Proposition \ref{3.3} for $k=2$ multiplied with $(n/r)(1/2)^{(2n)/r}$, which has already been considered to obtain \eqref{eq1}.

%<<<<<<<<<<<<<<<<<<<<<<<<<<<<<<<<<<<<<<<<<<<<<<<<<<<<<<<<<<<<<<<<<<<<<<<<<<<<<<<<<<<<<<<<<<<<<<<<<<<<<<<<<
\begin{corollary}\label{5.1}
For $k \in\mathbb{N}$ and fixed $r \in (0,\infty)$, there are constants $c_r,C_r > 0$, depending on $r$ and $k$ but not on $n$ or $\gamma$, 
such that
\begin{equation}\label{eq1}
c_r\, \left(A(r)\frac{n}{\gamma}\, \left(1+\frac{r}{n}\right)^{\frac{n}{2}}\right)^{\frac{kn}{r}}
\leq \mathbb{E}[V_n(Z_0)^k]
\leq C_r\,n^{\frac{1-k}{2}}\, \left(A(r)\frac{kn}{\gamma}\, \left(1+\frac{r}{n}\right)^{\frac{n}{2}} \right)^{\frac{kn}{r}},
\end{equation}
and  there are constants $c_r,C_r > 0$, depending on $r$  but not on $n$ or $\gamma$, such that
\begin{equation}\label{eq2}
c_r\, \sqrt{n}\left(A(r)\frac{n}{\gamma} \left(1+\frac{r}{n}\right)^{\frac{n}{2}}\right)^{\frac{2n}{r}} \leq  \Var[V_n(Z_0)] \leq C_r\, \sqrt{n}\left(A(r)\frac{4n}{\gamma} \left(1+\frac{r}{n}\right)^{\frac{n}{2}}\right)^{\frac{2n}{r}},
\end{equation}
where
\[
A(r):=
\frac{\pi^{\frac{r+1}{2}}}{e \Gamma(\frac{r+1}{2})}.
\]

\end{corollary}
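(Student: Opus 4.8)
The plan is to derive both \eqref{eq1} and \eqref{eq2} by feeding the exact formulas of Proposition \ref{3.3} and Theorem \ref{3.10} into Stirling's formula, keeping enough terms in the expansion so that the factors raised to the power $n/r$ are controlled up to corrections that are bounded above and below in $n$.

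First I would treat \eqref{eq1}. Starting from the two-sided bound in Proposition \ref{3.3}, I insert the explicit value of $c(n,r)$ from \eqref{eq_c_nr} together with $\kappa_n=\pi^{n/2}/\Gamma(n/2+1)$, so that the common base becomes
\[
\frac{n\kappa_n r}{2\gamma c(n,r)}=\frac{nr}{2\gamma}\cdot\frac{\sqrt{\pi}\,\Gamma(\frac{n+r}{2})}{\Gamma(\frac{n}{2}+1)\,\Gamma(\frac{r+1}{2})}.
\]
The heart of the argument is then to show that
\[
\kappa_n\left(\frac{n\kappa_n r}{2\gamma c(n,r)}\right)^{\frac{n}{r}}\asymp\left(A(r)\,\frac{n}{\gamma}\,\left(1+\frac{r}{n}\right)^{\frac{n}{2}}\right)^{\frac{n}{r}}
\]
up to factors bounded by constants depending only on $r$. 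To this end I expand the logarithm of the left-hand side by Stirling's formula, the only nontrivial ingredient being the ratio $\Gamma(\frac{n+r}{2})/\Gamma(\frac{n}{2}+1)$, for which a two-term expansion gives $(\frac{r}{2}-1)\log(n/2)+O(1/n)$. Collecting the $n\log n$- and the $n$-terms and matching them against $\frac{n}{r}\log\big(A(r)\frac{n}{\gamma}(1+\frac{r}{n})^{n/2}\big)$ forces precisely $\log A(r)=\frac{r+1}{2}\log\pi-1-\log\Gamma(\frac{r+1}{2})$, i.e.\ the stated value of $A(r)$; moreover the residual $\tfrac12\log n$ contributions coming from $\Gamma(\frac{n}{r}+1)$ and from $\kappa_n$ cancel, so that no spurious power of $n$ survives. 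For the exponent $k$ I note that the lower bound of Proposition \ref{3.3} equals $(\mathbb{E}[V_n(Z_0)])^k$, which already yields the left inequality in \eqref{eq1}; for the right inequality I estimate $\Gamma(\frac{kn}{r}+1)/\Gamma(\frac{n}{r}+1)^k$ by Stirling, which produces the extra factor $k^{kn/r}$ (turning $\frac{n}{\gamma}$ into $\frac{kn}{\gamma}$ inside the base) together with the polynomial factor $n^{(1-k)/2}$ already anticipated in Remark \ref{3.4}.

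For \eqref{eq2} I start from Theorem \ref{3.10}, that is $E(n,r)\,D(n,r)\le\Var[V_n(Z_0)]\le E(n,r)\,D(n,r)\,4^{2n/r+1}$, and use Lemma \ref{3.12}~(a) to bound $E(n,r)$ between $c_r$ and $3/2$. It remains to analyse $D(n,r)$, and here I exploit that $D(n,r)$ equals the $k=2$ upper bound of Proposition \ref{3.3} multiplied by $\frac{n}{r}(\tfrac12)^{2n/r}$. Reusing the $k=2$ case of the computation above, the factor $(\tfrac12)^{2n/r}$ absorbs the $2$ inside $A(r)\frac{2n}{\gamma}(1+\frac{r}{n})^{n/2}$ to yield $A(r)\frac{n}{\gamma}(1+\frac{r}{n})^{n/2}$, while $n^{-1/2}\cdot\frac{n}{r}$ contributes the prefactor $\sqrt{n}$; this gives the lower bound in \eqref{eq2}. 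For the upper bound the additional factor $4^{2n/r+1}$ from Theorem \ref{3.10} turns the base $A(r)\frac{n}{\gamma}(1+\frac{r}{n})^{n/2}$ into $A(r)\frac{4n}{\gamma}(1+\frac{r}{n})^{n/2}$ and only affects the $r$-dependent constant.

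The main obstacle is the Stirling bookkeeping for the Gamma ratio raised to the large power $n/r$: since a relative error of size $O(1/n)$ in the base becomes an $O(1/r)$ shift in the exponent and hence a bounded multiplicative constant, whereas a term of size $O(n^{-1}\log n)$ would produce an unwanted power of $n$, one must expand $\log\Gamma$ up to and including the constant order and verify that the $\tfrac12\log n$ terms cancel exactly. Once this is settled, both \eqref{eq1} and \eqref{eq2} follow by collecting the bounded residual factors into the constants $c_r$ and $C_r$.
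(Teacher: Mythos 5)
Your proposal is correct and follows essentially the same route as the paper: \eqref{eq1} from Proposition \ref{3.3} via Stirling's formula (the paper carries out this bookkeeping in the explicit two-sided estimates \eqref{calc1}--\eqref{calc3} of the Appendix), and \eqref{eq2} from Theorem \ref{3.10} combined with Lemma \ref{3.12}~(a) and the observation that $D(n,r)$ is the $k=2$ upper bound of Proposition \ref{3.3} multiplied by $(n/r)(1/2)^{2n/r}$. Your accounting of where the factors $k^{kn/r}$, $n^{(1-k)/2}$, $\sqrt{n}$ and the $4$ in the base come from matches the paper's computation.
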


\bigskip

For constant intensity $\gamma$, we infer from Corollary \ref{5.1} that $\mathbb{E}[V_n(Z_0)^k]$ and $\Var[V_n(Z_0)]$ 
go to infinity for fixed distance exponent $r$ and $n\to\infty$. 
Therefore we now choose the intensity $\gamma$  as a function of $n$ and $r$  such   that $\mathbb{E}[V_n(Z_0)]$ is equal to a positive constant $\lambda^{-1}$.
By Proposition \ref{3.3} we have
\[
\mathbb{E}[V_n(Z_0)] = \Gamma\left(\frac{n}{r}+1\right)\kappa_n \left(\frac{n\kappa_n r}{2\gamma c(n,r)}\right)^{\frac{n}{r}}.
\]
Therefore, if we define 
\[
\widehat{\gamma}(r,n) := \frac{n\kappa_n r}{2 c(n,r)} \left(\lambda \Gamma\left(\frac{n}{r}+1\right) \kappa_n \right)^{\frac{r}{n}}
\]
with $\lambda > 0$, then 
\[
\mathbb{E}[V_n(Z_0)] = \frac{1}{\lambda}.
\]
%
%We discuss now what happens to the variance for this choice of the intensity.
%
If we plug $\widehat{\gamma}(r,n)$ into the lower estimate from Theorem \ref{3.10}, keep in mind that by Lemma \ref{3.12} (a) the factor $E(n,r)$ is bounded from below by a constant depending only on $r$ and apply Stirling's formula, then we obtain that there is a constant $c_r>0$ depending on $r$ but not on $n$ or $\lambda$, such that
\[
\Var[V_n(Z_0)]\geq c_r\, \frac{1}{r}  \frac{n}{\lambda^2}\frac{\Gamma\left(\frac{2n}{r}+1\right)}{\Gamma\left(\frac{n}{r}+1\right)^2}2^{-\frac{2n}{r}}
\ge c_r \,\lambda^{-2}\sqrt{n} \rightarrow \infty \text{ as } n \rightarrow \infty.
\]

The preceding analysis suggests that in order to arrive at a limiting behaviour comparable to 
the case of a Poisson-Voronoi tessellation, i.e.\ with the variance converging to zero as $n\to\infty$, 
we cannot choose the distance parameter $r$ fixed but have to adjust it to the dimension. 
In fact, in the following theorem we consider the case where $r$ is proportional to $n$ which is 
the natural choice in view of the estimates that have been obtained.

%<<<<<<<<<<<<<<<<<<<<<<<<<<<<<<<<<<<<<<<<<<<<<<<<<<<<<<<<<<<<<<<<<<<<<<<<<<<<<<<<<<<<<<<<<<<<<<<<<<<<<<<<<<<<<<<
\begin{corollary}\label{5.6}
Let $r = a n$ with  fixed $a\in (0,\infty)$ and $k\in\mathbb{N}$. Then there are constants $c,C >0$, depending on $a$ and $k$   but not on $n$ or $\gamma$, 
such that 
\begin{equation}\label{eq3}
c \;{\gamma^{-\frac{k}{a}}}\; n^{\frac{k}{a}-\frac{k}{2}} \left(\frac{B(a)}{n}\right)^{\frac{kn}{2}}
\leq
\mathbb{E}[V_n(Z_0)^k]
\leq
C \;{\gamma^{-\frac{k}{a}}}\; n^{\frac{k}{a}-\frac{k}{2}} \left(\frac{B(a)}{n}\right)^{\frac{kn}{2}},
\end{equation}
and there are constants $c,C >0$,   depending on $a$   but not on $n$ or $\gamma$, 
such that 
\begin{align}\label{eq4}
& c \, \left[\frac{4(a+1)^{a+1}}{(a+2)^{a+2}}\right]^{\frac{n}{2}}
{\gamma^{-\frac{2}{a}}}\; n^{\frac{2}{a}-\frac{3}{2}}\; \left(\frac{B(a)}{n}\right)^{n}\nonumber\\
& \qquad \qquad\leq  \Var[V_n(Z_0)]
\leq  C\, \left[\frac{4(a+1)^{a+1}}{(a+2)^{a+2}}\right]^{\frac{n}{2}}
{\gamma^{-\frac{2}{a}}}\; n^{\frac{2}{a}-\frac{3}{2}}\; \left(\frac{B(a)}{n}\right)^{n},
\end{align}
where
\[
B(a):= \frac{2\pi e(a+1)^{\frac{a+1}{a}}}{a}.
\]
Hence, if the intensity $\gamma$ is constant and $r=an$, then $\mathbb{E}[V_n(Z_0)^k]$ and $\Var[V_n(Z_0)]$ are converging to zero as $n\to\infty$.
\end{corollary}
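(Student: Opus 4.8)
The plan is to derive the moment bounds \eqref{eq3} directly from Proposition \ref{3.3} and the variance bounds \eqref{eq4} from Theorem \ref{3.10} combined with Lemma \ref{3.12}(b), and then to read off the stated limiting behaviour.

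To obtain \eqref{eq3}, I would substitute $r = an$ in Proposition \ref{3.3}. This turns $n/r$ into the constant $1/a$, so that the Gamma prefactors $\Gamma(n/r+1) = \Gamma(1/a+1)$ and $\Gamma(kn/r+1) = \Gamma(k/a+1)$ become constants depending only on $a$ and $k$; in particular, the upper and lower bounds differ only by the constant factor $\Gamma(k/a+1)/\Gamma(1/a+1)^k$. It then remains to analyse the common factor $\kappa_n^k(n\kappa_n r/(2\gamma c(n,r)))^{kn/r}$, whose exponent $kn/r$ equals $k/a$. Inserting the explicit value of $c(n,r)$ from \eqref{eq_c_nr} and $\kappa_n = \pi^{n/2}/\Gamma(n/2+1)$ expresses this factor through Gamma functions with arguments $n/2+1$, $(an+1)/2$ and $(a+1)n/2$, and Stirling's formula then yields its precise order. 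Collecting the exponential and polynomial contributions produces the constant $B(a) = 2\pi e(a+1)^{(a+1)/a}/a$, the power $n^{k/a-k/2}$, and the factor $\gamma^{-k/a}$ arising from the $1/\gamma$.

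For \eqref{eq4} I would apply Theorem \ref{3.10}. Since $4^{2n/r+1} = 4^{2/a+1}$ is constant for $r = an$, the variance has the same order as $E(n,an)\,D(n,an)$. The factor $D(n,an)$ coincides with the $k=2$ moment factor up to the constant multiplier $(1/a)\,2^{-2/a}\Gamma(2/a+1)$, so the preceding step gives it the order $\gamma^{-2/a}n^{2/a-1}(B(a)/n)^n$. For $E(n,an)$ I would use Lemma \ref{3.12}(b): with $r = an$ both prefactors $(1+r/n)^{-1/2}/\sqrt{r+1}$ and $(1+r/n)/\sqrt{r+1}$ are of order $n^{-1/2}$, while the exponential part simplifies, using $(1+a/2)^{-n/2} = 2^{n/2}(a+2)^{-n/2}$ and $(1+n/(n+r))^{-(n+r)/2} = (a+1)^{(a+1)n/2}(a+2)^{-(a+1)n/2}$, to exactly $[4(a+1)^{a+1}/(a+2)^{a+2}]^{n/2}$. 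Multiplying the orders of $E(n,an)$ and $D(n,an)$ then gives the stated expression $\gamma^{-2/a}n^{2/a-3/2}[4(a+1)^{a+1}/(a+2)^{a+2}]^{n/2}(B(a)/n)^n$, the upper and lower bounds differing only by the constants supplied by Theorem \ref{3.10} and Lemma \ref{3.12}(b).

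Finally, the last assertion is immediate: for constant $\gamma$ the dominant factor in \eqref{eq3} is $(B(a)/n)^{kn/2}$ and in \eqref{eq4} is $(B(a)/n)^n$, and since $B(a)$ is fixed these contain the super-exponentially small factors $n^{-kn/2}$ and $n^{-n}$, which override the remaining polynomial powers and fixed exponential bases; hence both the moments and the variance tend to zero. The main obstacle is the bookkeeping in the Stirling expansions: one must handle the half-integer shifts in the Gamma arguments carefully enough to recover the exact polynomial powers $n^{k/a-k/2}$ and $n^{2/a-3/2}$ and, above all, to verify that the exponential bases collapse precisely to $B(a)/n$ and to $4(a+1)^{a+1}/(a+2)^{a+2}$ rather than to nearby incorrect constants.
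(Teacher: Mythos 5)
Your proposal is correct and follows essentially the same route as the paper: \eqref{eq3} from Proposition \ref{3.3} with $r=an$ plus Stirling's formula, and \eqref{eq4} from Theorem \ref{3.10} together with Lemma \ref{3.12}(b) for $E(n,an)$ and the identification of $D(n,an)$ with a constant multiple of the $k=2$ moment bound; your simplification of the exponential factors to $\bigl[4(a+1)^{a+1}/(a+2)^{a+2}\bigr]^{n/2}$ and $(B(a)/n)^{kn/2}$ checks out. The only (immaterial) omission is that Lemma \ref{3.12}(b) requires $n\ge 3$, so the case $n=2$ must be absorbed into the constants, as the paper notes.
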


\begin{proof}
The inequalities in \eqref{eq3} follow from Proposition \ref{3.3} by means of Stirling's formula, compare \eqref{calc1}. 

For the inequalities in \eqref{eq4} we first use Theorem \ref{3.10}. There is nothing to prove for $n=2$. Hence, we assume that $n\geq 3.$
 Then,  $E(n,an)$ is bounded by means of Lemma \ref{3.12} (b), whereas the factor $D(n,an)$ is equal to the upper bound from Proposition \ref{3.3} for $k=2$ multiplied by $(1/a)(1/2)^{2/a}$, which has already been considered to obtain \eqref{eq3}.
\end{proof}

\bigskip
%<<<<<<<<<<<<<<<<<<<<<<<<<<<<<<<<<<<<<<<<<<<<<<<<<<<<<<<<<<<<<<<<<<<<<<<<<<<<<<<<<<<<<<<<<<<<<<<<<<<<<<<<<<<<<<<<<
Now we choose again $r=an$, with fixed $a>0$, and determine the intensity $\gamma$ in such a way that the expected  volume of the zero cell is equal to a positive constant.
This is possible for a special intensity function depending on  $a$, the dimension $n$ and a positive constant $\lambda$ that can be prescribed arbitrarily. In the following theorem, we describe the asymptotic behaviour 
of the volume of the zero cell in such a setting. 

\bigskip
\begin{theorem}\label{5.10}
Let $r=an$, with constant $a \in (0,\infty)$, and let the intensity be chosen as
\[
\widehat{\gamma}(a,n) = \frac{a n^2\kappa_n }{2 c(n,an)} \left(\lambda \Gamma\left(\frac{1}{a}+1\right) \kappa_n \right)^{a}
\]
with $\lambda > 0$. Then the following is true. 
\begin{enumerate}
%-----------------
\item[{\rm (a)}]
For $k\in\mathbb{N}$, 
\[
\frac{1}{\lambda^k}
\leq
\mathbb{E}[V_n(Z_0)^k]
\leq
\frac{\Gamma\big(\frac{k}{a}+1\big)}{\lambda^k \Gamma(\frac{1}{a}+1)^k};
\]
in particular, 
\[
\mathbb{E}[V_n(Z_0)] = \frac{1}{\lambda}.
\]
%----------------
\item[{\rm (b)}] There are constants $c, C >0$, depending on $a$  but not on $n$ or $\lambda$, such that
\[
c\,\frac{1}{\lambda^2} \frac{1}{\sqrt{n}} \left[\frac{4(a+1)^{a+1}}{(a+2)^{a+2}}\right]^{\frac{n}{2}}
\leq 
\Var[V_n(Z_0)]
 \leq  C\, \frac{1}{\lambda^2} \frac{1}{\sqrt{n}} \left[\frac{4(a+1)^{a+1}}{(a+2)^{a+2}}\right]^{\frac{n}{2}};
\] 
in particular,
\[
\lim\limits_{n \rightarrow \infty}\Var[V_n(Z_0)] = 0.
\]
\end{enumerate}
\end{theorem}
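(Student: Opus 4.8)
The plan is to obtain both parts by specializing the general moment bounds of Proposition \ref{3.3} and the variance inequalities of Theorem \ref{3.10} to the regime $r=an$ and to the particular intensity $\widehat{\gamma}(a,n)$, whose whole purpose is to make the dimension-dependent factors collapse. For part (a) I would start from Proposition \ref{3.3} with $r=an$, so that $n/r=1/a$ and $kn/r=k/a$; in particular $\Gamma(n/r+1)=\Gamma(1/a+1)$ and $\Gamma(kn/r+1)=\Gamma(k/a+1)$. The key observation is that the defining formula for $\widehat{\gamma}(a,n)$ is engineered precisely so that
\[
\left(\frac{n\kappa_n r}{2\widehat{\gamma}(a,n)\,c(n,r)}\right)^{n/r} = \frac{1}{\lambda\,\Gamma(\frac{1}{a}+1)\,\kappa_n}.
\]
Substituting this into the lower bound of Proposition \ref{3.3} cancels the factor $\Gamma(\frac1a+1)^k\kappa_n^k$ and leaves $\lambda^{-k}$, while the upper bound leaves $\Gamma(\frac{k}{a}+1)/(\lambda^k\Gamma(\frac1a+1)^k)$; taking $k=1$ makes both bounds coincide and gives $\mathbb{E}[V_n(Z_0)]=1/\lambda$. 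This step is pure bookkeeping once the displayed identity is recorded.

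For part (b) I would invoke Theorem \ref{3.10}. With $r=an$ the amplification factor $4^{2n/r+1}=4^{2/a+1}$ depends only on $a$, so the upper and lower variance bounds agree up to an $a$-dependent constant and it suffices to analyse the product $E(n,an)\,D(n,an)$. The factor $D(n,an)$ can be computed exactly: using $2n/r=2/a$ together with the same identity for $\widehat{\gamma}(a,n)$ as above, all powers of $\kappa_n$ cancel and one finds that $D(n,an)$ is \emph{independent of} $n$ and proportional to $\lambda^{-2}$, namely $D(n,an)=\Gamma(\frac2a+1)\big(a\,2^{2/a}\,\Gamma(\frac1a+1)^2\big)^{-1}\lambda^{-2}$.

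It then remains to feed $r=an$ into the two-sided estimate for $E(n,an)$ from Lemma \ref{3.12}(b). Here $1+r/n=1+a$, $r/2n=a/2$ and $n/(n+r)=1/(1+a)$ are all constants in $a$, while $\sqrt{r+1}=\sqrt{an+1}$ supplies the $n^{-1/2}$ decay; the three exponential factors $2^{n/2}$, $(1+\frac a2)^{-n/2}$ and $(1+\frac{1}{1+a})^{-(1+a)n/2}$ combine to the single base $\big[4(a+1)^{a+1}/(a+2)^{a+2}\big]^{n/2}$. Multiplying $E(n,an)$ by the constant $D(n,an)$ yields exactly the asserted two-sided variance bound, with the gap between the constants $c$ and $C$ absorbing both the $(1+a)^{\pm 1/2}$ discrepancy in Lemma \ref{3.12}(b) and the factor $4^{2/a+1}$.

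The main obstacle, and the only genuinely qualitative step, is to show that the exponential base satisfies $4(a+1)^{a+1}/(a+2)^{a+2}<1$ for every $a>0$, which is what forces $\Var[V_n(Z_0)]\to 0$. I would prove this by taking logarithms and studying $h(a):=(a+2)\ln(a+2)-(a+1)\ln(a+1)$: since $h'(a)=\ln\frac{a+2}{a+1}>0$ and $h(0)=2\ln 2=\ln 4$, the function $h$ strictly exceeds $\ln 4$ for all $a>0$, which is precisely the required inequality (and collapses to equality in the limit $a\to 0$, consistent with the borderline nature of that case). Finally, since Lemma \ref{3.12}(b) is stated only for $n\ge 3$, the single value $n=2$ must be handled separately via Lemma \ref{3.12}(c); as both sides of the claimed estimate scale as $\lambda^{-2}$ this is harmless, and the constants $c,C$ can be enlarged to cover it.
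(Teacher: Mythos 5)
Your proposal is correct and follows essentially the same route as the paper: part (a) by substituting $\widehat{\gamma}(a,n)$ into Proposition \ref{3.3} so that $\big(\tfrac{n\kappa_n r}{2\gamma c(n,r)}\big)^{n/r}=(\lambda\Gamma(\tfrac1a+1)\kappa_n)^{-1}$, and part (b) by combining Theorem \ref{3.10} with the observation that $D(n,an)$ is an $n$-independent constant times $\lambda^{-2}$ and with the two-sided bounds for $E(n,an)$ from Lemma \ref{3.12}(b), treating $n=2$ separately. Your explicit verification that $4(a+1)^{a+1}/(a+2)^{a+2}<1$ for all $a>0$ is a detail the paper leaves implicit, but everything checks out.
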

%-----------------------------------------------------------------------
\begin{proof}
The inequalities in (a) follow from Proposition \ref{3.3} and the special choice of the intensity as $\widehat{\gamma}(a,n)$.
In (b) there is nothing to prove for $n=2$. For $n\geq 3$ the inequalities follow from Theorem \ref{3.10}, since $D(n,an)$ is proportional to $\lambda^{-2}$ for the intensity chosen as $\widehat{\gamma}(a,n)$ and by using the bounds for $E(n,an)$ provided by Lemma \ref{3.12} (b).
\end{proof}

\bigskip
\begin{remark}\label{OrdnungWieAS}
For $a=1$, we have $\widehat\gamma(1,n)=n\kappa_n2^{n-1}\lambda$ and Theorem \ref{5.10} recovers the sharp bounds obtained by Alishahi and Sharifitabar in \cite{AS}.
\end{remark}
%<<<<<<<<<<<<<<<<<<<<<<<<<<<<<<<<<<<<<<<<<<<<<<<<<<<<<<<<<<<<<<<<<<<<<<<<<<<<<<<<<<<<<<<<<<<<<<<<<<<<<<<<<<

%%%%%%%%%%%%%%%%%%%%%%%%%%%%%%%%%%%%%%%%%%%%%%%%%%%%%%%%%%%%%%%%%%%%%%%%%%%%%%%%%%%%%%%%%%%%%%%%%%%%%%%%%%%%%%%
%%%%%%%%%%%%%%%%%%%%%%%%%%%%%%%%%%%%%%%%%%%%%%%%%%%%%%%%%%%%%%%%%%%%%%%%%%%%%%%%%%%%%%%%%%%%%%%%%%%%%%%%%%%%%%%
\begin{appendix}
\section{Auxiliary Inequalities}
The subsequent inequalities follow from Stirling's  formula for the Gamma function (see \cite[(12.33)]{WW} or \cite[p.~24]{Artin1}).
As before let $r \in (0,\infty)$. Then
\begin{align}\label{calc1}
& \frac{\sqrt{\pi}r}{\Gamma(\frac{r+1}{2}) (2e)^{\frac{r}{2}}}\left(\frac{n+r}{n}\right)^{\frac{n-1}{2}}(n+r)^{\frac{r}{2}} e^{-\frac{1}{6n}}\nonumber\\ 
& \qquad\qquad \leq \frac{n\kappa_n r}{2 c(n,r)} \leq \frac{\sqrt{\pi}r}{\Gamma(\frac{r+1}{2}) (2e)^{\frac{r}{2}}}\left(\frac{n+r}{n}\right)^{\frac{n-1}{2}}(n+r)^{\frac{r}{2}} e^{\frac{1}{6(n+r)}}.
\end{align}

Furthermore, for $k\in\mathbb{N}$, we have

\begin{align}\label{calc2}
& \sqrt{\frac{2\pi}{r e^2}} \left(\frac{e^2}{\pi}\right)^{\frac{k}{2}} \frac{\sqrt{kn+r}}{(n+2)^{\frac{k}{2}}}      
 \left(\frac{\sqrt{2e\pi}}{\left(\frac{re}{k}\right)^{\frac{1}{r}}}\right)^{nk}
 e^{-\frac{k}{6(n+2)}} \left(\left(n+\frac{r}{k}\right)^{\frac{1}{r}}(n+2)^{-\frac{1}{2}}\right)^{nk}\nonumber\\
 &  \leq \, \Gamma\left(\frac{kn}{r}+1\right)\kappa_n^k\nonumber\\
&  \leq \sqrt{\frac{2\pi}{r e^2}} \left(\frac{e^2}{\pi}\right)^{\frac{k}{2}} \frac{\sqrt{kn+r}}{(n+2)^{\frac{k}{2}}} \left(\frac{\sqrt{2e\pi}}{\left(\frac{re}{k}\right)^{\frac{1}{r}}}\right)^{nk}
e^{\frac{r}{12(kn+r)}} \left(\left(n+\frac{r}{k}\right)^{\frac{1}{r}}(n+2)^{-\frac{1}{2}}\right)^{nk}
\end{align}
and
\begin{align}\label{calc3}
&\Gamma\left(\frac{n}{r}+1\right)^k\kappa_n^k \nonumber\\
&\qquad \geq \left(\frac{2}{r}\right)^{\frac{k}{2}} \left(\frac{n+r}{n+2}\right)^{\frac{k}{2}} \left(\frac{\sqrt{2e\pi}}{(re)^{\frac{1}{r}}}\right)^{nk}
e^{-\frac{k}{6(n+2)}}\left((n+r)^{\frac{1}{r}} (n+2)^{-\frac{1}{2}}\right)^{nk}.
\end{align}

\end{appendix}
%%%%%%%%%%%%%%%%%%%%%%%%%%%%%%%%%%%%%%%%%%%%%%%%%%%%%%%%%%%%%%%%%%%%%%%%%%%%%%%
\section*{Acknowledgements}
The authors would like to thank a referee and an editor for their useful comments which helped to improve the manuscript and Walter Mickel (Karlsruhe Institute of Technology (KIT), Department of Mathematics, D-76128 Karlsruhe, Germany) for his  advice concerning the numerical calculations. 

\bigskip

%\bibliographystyle{apt}
%\bibliography{thesis2}

\begin{thebibliography}{10}
\footnotesize
\bibitem{AS}
{\sc Alishahi, K. and Sharifitabar, M.} (2008).
\newblock Volume degeneracy of the typical cell and the chord length
  distribution for Poisson-Voronoi tessellations in high dimensions.
\newblock {\em \AAP\/} {\bf 40,} 919--938.

\bibitem{Artin1}
{\sc Artin, E.} (1964).
\newblock {\em The Gamma Function}.
\newblock Holt, Rinehart and Winston, New York.

\bibitem{hug:Barany2008}
{\sc B\'ar\'any, I.} (2008).
\newblock Random points and lattice points in convex bodies.
\newblock {\em \BAMS\/} {\bf 45,} 339--365.


\bibitem{Calka2010}
{\sc Calka, P.} (2010).
\newblock {Tessellations}.
\newblock In {\em New Perspectives in Stochastic Geometry}. ed. W.~S. Kendall
  and I.~Molchanov.
\newblock Oxford Univ. Press., Oxford, pp.~145--169.



\bibitem{GM}
{\sc Federer, H.} (1969).
\newblock {\em Geometric Measure Theory}.
\newblock Springer, Berlin.

\bibitem{HilhorstCalka08}
{\sc Hilhorst, H.J. and Calka, P.} (2008).
\newblock Random line tessellations of the plane: statistical 
properties of many-sided cells. 
\newblock {\em J. Stat. Phys.\/} {\bf 132,} 627--647.


\bibitem{Hug2}
{\sc Hug, D.} (2007).
\newblock {Random Mosaics}.
\newblock In {\em Stochastic Geometry}. ed. W.~Weil.
\newblock Springer, Berlin pp.~247--266.

%%
\bibitem{Hug4}
{\sc Hug, D., Reitzner, M. and Schneider, R.} (2004).
\newblock The limit shape of the zero cell in a stationary Poisson hyperplane
  tessellation.
\newblock {\em \AP\/} {\bf 32,} 1140--1167.
%%
\bibitem{Hug3}
{\sc Hug, D. and Schneider, R.} (2007).
\newblock Asymptotic shapes of large cells in random tessellations.
\newblock {\em Geom. Funct. Anal.\/} {\bf 17,} 156--191.
%%
%%
\bibitem{Hug7}
{\sc Hug, D. and Schneider, R.} (2010).
\newblock Large faces in Poisson hyperplane mosaics.
\newblock {\em \AP\/} {\bf 38,} 1320--1344.
%%
\bibitem{Mecke5}
{\sc Mecke, J.} (1998).
\newblock Inequalities for mixed stationary Poisson hyperplane tessellations.
\newblock {\em \AAP\/} {\bf 30,} 921--928.
%%
\bibitem{Mecke6}
{\sc Mecke, J.} (1999).
\newblock On the relationship between the 0-cell and the typical cell of a
  stationary random tessellation.
\newblock {\em Pattern Recogn.\/} {\bf 32,} 1645--1648.
%%
%%
\bibitem{Moller1}
{\sc M{\o}ller, J.} (1994).
\newblock {\em Lectures on Random Voronoi Tessellations.} vol.~87 of {\em
  Lecture Notes in Statistics}.
\newblock Springer, New York.
%%
\bibitem{Muche1}
{\sc Muche, L.} (2010).
\newblock Contact and chord length distribution functions of the
  Poisson-Voronoi tessellation in high dimensions.
\newblock {\em \AAP\/} {\bf 42,} 48--68.
%%
\bibitem{Muche2}
{\sc Muche, L. and Ballani, F.} (2010).
\newblock The second volume moment of the typical cell and higher moments of edge lengths of the spatial Poisson-Voronoi tessellation.
\newblock {\em Monatsh. Math.\/} {\bf 163,} 71--80.

\bibitem{mueller}
{\sc M\"uller, C.} (1998). 
\newblock{\em Analysis of Spherical Symmetries in Euclidean Spaces.} Applied Mathematical Sciences, vol. 129. 
\newblock Springer, New York.
%%
%%
\bibitem{Newman1983}
{\sc Newman, C.M., Rinott, Y. and Tversky, A.} (1983).
\newblock Nearest neighbors and Voronoi regions in certain point processes.
\newblock {\em \AAP\/} {\bf 15,} 726--751.
%%
\bibitem{Newman1985}
{\sc Newman, C.M. and Rinott, Y.} (1985).
\newblock Nearest neighbors and Voronoi volumes in high-dimensional point processes with various distance functions.
\newblock {\em \AAP\/} {\bf 17,} 794--809.
%%
\bibitem{Okabe1}
{\sc Okabe, A., Boots, B., Sugihara, K. and Chiu, S.} (2000).
\newblock {\em Spatial tessellations: Concepts and Applications of Voronoi
  Diagrams}.
\newblock Wiley {\&} Sons, Chichester.
%%
\bibitem{hug:Reitzner2010}
{\sc Reitzner, M.} (2010).
\newblock {Random Polytopes}.
\newblock In {\em New Perspectives in Stochastic Geometry}. ed. W.~S. Kendall
  and I.~Molchanov.
\newblock Oxford Univ. Press., Oxford, pp.~45--76.
%%
\bibitem{Schneider1}
{\sc Schneider, R.} (2003).
\newblock Nonstationary Poisson hyperplanes and their induced tessellations.
\newblock {\em \AAP\/} {\bf 35,} 139--158.
%%
\bibitem{RecentResults}
{\sc Schneider, R.} (2008).
\newblock Recent results on random polytopes.
\newblock {\em Boll. Un. Math. Ital.\/} {\bf 1,} 17--39.
%%
\bibitem{Sto}
{\sc Schneider, R. and Weil, W.} (2008).
\newblock {\em Stochastic and Integral Geometry}.
\newblock Springer, Berlin.
%%

\bibitem{Spodarev}
{\sc Spodarev, E.} (2013). 
\newblock {\em Stochastic Geometry, Spatial Statistics and Random Fields.} 
vol.~2068 of {\em Lecture Notes in Mathematics}. 
\newblock Springer, Berlin.

\bibitem{Stoyan1}
{\sc Stoyan, D., Kendall, W.~S. and Mecke, J.} (1995).
\newblock {\em Stochastic Geometry and its Applications.} 2nd~ed.
\newblock Wiley, Chichester.


\bibitem{WW}
{\sc Whittaker, E.~T., Watson, G.~N.} (1927). 
\newblock {\em A Course of Modern Analysis.} 1996 reprint of the 4th~ed. 
\newblock Cambridge University Press, Cambridge.
%%
\bibitem{Yao1}
{\sc Yao, Y.-C.} (2010).
\newblock On variances of partial volumes of the typical cell of a
  Poisson-Voronoi tessellation and large-dimensional volume degeneracy.
\newblock {\em \AAP\/} {\bf 42,} 359--370.

\end{thebibliography}

\end{document}